\definecolor{halfgray}{gray}{0.55} 
\definecolor{webgreen}{rgb}{0,0.5,0}
\definecolor{webbrown}{rgb}{.6,0,0} \hypersetup{%
\newtheorem{theorem}{Theorem}[section]
\newtheorem{lemma}[theorem]{Lemma}
\newtheorem{corollary}[theorem]{Corollary}
\newtheorem{proposition}[theorem]{Proposition}
\theoremstyle{definition}
\def\R{\mathbb{R}}
\def\N{\mathbb{N}}
\def\Z{\mathbb{Z}}
\def\Reg{\mathcal{R}^{\mu}}
\def\Lam{\Lambda^{\mu}}
\newcommand{\norm}[1]{{\left\lVert \, #1 \, \right\rVert}}
\def\O{\mathcal{O}}
\def\cO{\mathfrak{O}}
\def\Id{\text{Id}}
\def\GL{GL(d,\mathbb{R})}
\def\supp{\text{supp}}
\def\cHCT{\mathcal{H}(C,\tau)}
\def\cN{\mathcal{N}}
\def\cDN{\mathcal{D}_A(N,\theta)}
\def\Ws{W^s_{loc}}
\def\Wu{W^u_{loc}}
\def\d{\operatorname{dist}}
\begin{document}

\title
[A Liv\v{s}ic theorem for matrix cocycles]
{A Liv\v{s}ic theorem for matrix cocycles over non-uniformly hyperbolic systems}

\author[Lucas Backes]{Lucas Backes}
\address{Departamento de Matem\'atica, Universidade Federal do Rio Grande do Sul, Av. Bento Gon\c{c}alves 9500, CEP 91509-900, Porto Alegre, RS, Brazil.}
\email{lhbackes@impa.br }

\author{Mauricio Poletti}
\address{LAGA -- Universit\'e Paris 13, 99 Av. Jean-Baptiste Cl\'ement, 93430 Villetaneus, France.}
\email{mpoletti@impa.br}

\date{\today}

\subjclass[2010]{Primary: 37H15, 37A20; Secondary: 37D25}
\keywords{Liv\v{s}ic theorem, non-uniformly hyperbolic systems, matrix-valued cocycles.}

\begin{abstract}
We prove a Liv\v{s}ic-type theorem for H\"older continuous and matrix-valued cocycles over non-uniformly hyperbolic systems. More precisely, we prove that whenever $(f,\mu)$ is a non-uniformly hyperbolic system and $A:M \to GL(d,\mathbb{R}) $ is an $\alpha$-H\"{o}lder continuous map satisfying $ A(f^{n-1}(p))\ldots A(p)=\text{Id}$ for every  $p\in \text{Fix}(f^n)$ and $n\in \N$, there exists a measurable map $P:M\to GL(d,\mathbb{R})$ satisfying $A(x)=P(f(x))P(x)^{-1}$ for $\mu$-almost every $x\in M$. Moreover, we prove that whenever the measure $\mu$ has local product structure the transfer map $P$ is $\alpha$-H\"{o}lder continuous in sets with arbitrary large measure.

\end{abstract}

\maketitle

\section{Introduction}

Consider an invertible measure preserving transformation $f:(M,\mu)\to (M,\mu)$ of a standard probability space. Given a measurable map $A:M\to G$, where $G$ denotes a topological group, we are interested in determining whether there exists a map $P:M\to G$ satisfying
\begin{equation}\label{eq: transf map}
A(x)=P(f(x))P(x)^{-1} \text{ for $\mu$-almost every } x\in M.
\end{equation}
When that is the case, $A$ is said to be a \emph{coboundary}. Moreover, whenever \eqref{eq: transf map} does admit a solution, usually called \emph{transfer map}, we are interested in understanding its regularity properties. 

These problems go back to the seminal papers \cite{Liv71, Liv72} of Liv\v{s}ic where he have proved that, whenever $f$ is hyperbolic, $(G,\cdot)$ is an Abelian group and $A$ is H\"{o}lder continuous, $A$ is a coboundary with a H\"{o}lder transfer map if and only if 
\begin{equation}\label{eq: POOC intro}
A(f^{n-1}(p))\cdot \ldots \cdot A(f(p))A(p)=\Id \text{ for every } p\in \text{Fix}(f^n) \text{ and } n\in \N
\end{equation}
where $\Id$ denotes the identity element of $G$. Since then, due to its far reaching applications, this result has been generalized in many different directions. For instance, by considering 

\emph{$\circ$ More general groups.} Suppose $f$ is hyperbolic. It is easy to see that condition \eqref{eq: POOC intro} is necessary for the existence of continous solutions of \eqref{eq: transf map} for any group $G$. Thus, the main challenge is to know whether this condition is also sufficient when considering cocycles taking values in more general groups. Affirmative answers were given in several contexts. Initially, the main technique used to deal with this problem was to ``control distortions" and then proceed as in the Abelian case. In order to do so, \emph{localization hypotheses} were used (see for instance \cite{Liv72, PW01,  dlLW10, KN11} and references therein). The first complete solution to this problem, with no localization hypotheses, was given by Kalinin \cite{Kal11} in the case when $G=\GL$ and by \cite{KP16, AKL} in the case when $G=\text{Diff}^{1+\epsilon}(N)$ is the diffeomorphism group of any closed manifold $N$. The more general problem of \emph{cohomology} was also consider by many authors (see for instance \cite{Par99, Sch99, Bac15, Sa15, BK16} and references therein);

\emph{$\circ$ More general dynamics.} Suppose $(G,\cdot)=(\mathbb{R},+)$. The main difficulty when trying to consider more general base dynamics, like \emph{partially hyperbolic systems}, for instance, is that, unlike transitive hyperbolic diffeomorphisms where the set of periodic points is dense, a transitive partially hyperbolic diffeomorphism might have no periodic orbits at all (for instance, one can take the time-$t$ map of a transitive Anosov flow, for an appropriate choice of t). Hence condition \eqref{eq: POOC intro} can be empty. Thus, in order to solve \eqref{eq: transf map} new obstructions are needed. In \cite{KK96}, Katok and Kononenko presented a new set of obstructions suitable to this context which were also used by Wilkinson in \cite{Wil13}. Another natural class of systems to consider as base dynamics is that of \emph{non-uniformly hyperbolic systems}. In this context we still have plenty of periodic points, at least close to the support of the hyperbolic measure, and thus condition \eqref{eq: POOC intro} still can be used. To the best of our knowledge, the most general result gives us only a measurable solution of \eqref{eq: transf map} whenever \eqref{eq: POOC intro} is satisfied (see Theorem S.4.17 of \cite{KH95} or Theorem 15.3.1 of \cite{BP07} and the end of the next paragraph). 

Another important line of research in this area is the study of regularity properties of solutions to \eqref{eq: transf map}. In general, whenever the base dynamics is hyperbolic one can recover the regularity of the data. To ilustrate our claim, whenever $f$ is a $C^r$ hyperbolic diffeomorphism and $A\in C^r(M,\mathbb{R})$ for any non-integer $r>1$, every continuous solution to \eqref{eq: POOC intro} is $C^r$ (see for instance \cite{dlLMM86} and references therein). Similary, if $A$ is H\"older continuous then any measurable solution $P$ has a version that is also H\"older continuous \cite{Liv72}. For the case of more general groups, see \cite{NT96,NT98}. In the case when the base dynamics is partially hyperbolic, the most general result is due to Wilkinson \cite{Wil13} and says that if $A\in C^k(M,\mathbb{R})$ for $k\geq 2$ and $f$ is a partially hyperbolic, accessible and strongly $r$-bunched diffeomorphism for some $r < k-1$ or $r=1$ then any continuous solution to \eqref{eq: transf map} is $C^r$. For non-uniformly hyperbolic systems $(f,\mu)$, however, it is not always possible to get good regularity for $P$ in the ``whole  space". For instance, in \cite{Pol05}, Pollicot presented a Lipschitz map $A:M\to \mathbb{R}$ admiting a measurable solution to \eqref{eq: transf map} that does not have a H\"older continuos version. However, he was able to prove that, for $\mu=\text{Lebesgue}$ and any H\"{o}lder continuos map $A:M\to \mathbb{R}$, any measurable solution to \eqref{eq: transf map} is H\"{o}lder continuous on sets of arbitrary large measure.

In the present paper we consider both problems, that is, the existence of solutions to \eqref{eq: transf map} and the regularity properties of the transfer map, whenever it exists, in the case when the base dynamics is non-uniformly hyperbolic and the cocycle takes values in the group $G=\GL$. In fact, we extend the ``existence theorem" (Theorem S.4.17 of \cite{KH95} and Theorem 15.3.1 of \cite{BP07}) and Pollicot's regularity result \cite{Pol05} to this context. Our approach is inspired on that of \cite{Kal11} and \cite{Pol05}.

\subsection{Main results} The main results of this work are the following (see Section \ref{sec: prelim} for precise definitions):

\begin{theorem}\label{theo: 1}
Let $f:M\to M$ be a $C^{1+\epsilon}$ diffeomorphism, $\mu$ a hyperbolic ergodic $f$-invariant measure and $A:M \to \GL $ an $\alpha$-H\"{o}lder continuous map. Assume that 
\begin{equation}\label{eq: POOC}
A^n(p)=\Id \text{ for every } p\in \text{Fix}(f^n) \text{ and } n\in \N.
\end{equation}
Then, there exists a measurable map $P:M\to GL(d,\R)$ so that
$$A(x)=P(f(x))P(x)^{-1} \text{ for $\mu$-almost every } x\in M.$$
\end{theorem}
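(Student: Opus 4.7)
My plan is to adapt Kalinin's periodic approximation strategy \cite{Kal11} from the uniformly hyperbolic to the non-uniformly hyperbolic setting, using Pesin theory and Katok's closing lemma in regular sets in place of the Anosov closing lemma. A direct telescoping approach fails because $\GL$ is non-Abelian, so one must first control the norm growth of $A^n$ along $\mu$-typical orbits before any local construction of the transfer map $P$ can converge.

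\textbf{Step 1: Vanishing Lyapunov exponents.} I would first prove that every Lyapunov exponent of $A$ with respect to $\mu$ vanishes. Fix a Pesin block $\Lam_\ell$ of positive $\mu$-measure on which local stable/unstable manifolds have uniform size and the hyperbolicity estimates are uniform. For $\mu$-typical $x \in \Lam_\ell$, pick return times $n_k \to \infty$ with $f^{n_k}(x) \in \Lam_\ell$ close to $x$. Katok's closing lemma produces periodic points $p_k$ of period $n_k$ that exponentially shadow the orbit segment $x, f(x), \ldots, f^{n_k-1}(x)$. The POOC forces $A^{n_k}(p_k) = \Id$, and $\alpha$-H\"older continuity of $A$, telescoped along the shadowing orbit, yields $\log \|A^{n_k}(x)\| = o(n_k)$, and similarly for the inverse. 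Applying Kingman's subadditive ergodic theorem along an exhausting family of Pesin blocks, the top Lyapunov exponents of $A$ and of $A^{-1}$ must both be zero, hence all exponents vanish.

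\textbf{Steps 2--3: Local holonomies and assembly of $P$.} Vanishing exponents and Oseledets give $\|A^{\pm n}(x)\| \le C_\varepsilon(x) e^{\varepsilon n}$ for $\mu$-a.e.\ $x$ and every $\varepsilon>0$. Fix a density point $x_0 \in \Lam_\ell$. For $y \in \Ws(x_0)$ inside the block, define the stable holonomy
$$H^s_{x_0, y} := \lim_{n\to\infty} A^n(y)^{-1} A^n(x_0).$$
The Cauchy increment equals $A^n(y)^{-1} [A(f^n y)^{-1} A(f^n x_0) - \Id] A^n(x_0)$, which via H\"older continuity of $A$ and exponential Pesin contraction $\d(f^n x_0, f^n y) \le C \lambda^n$ is bounded by roughly $C_\varepsilon(x_0) C_\varepsilon(y) e^{2\varepsilon n} \lambda^{\alpha n}$; choosing $\varepsilon < \tfrac{\alpha}{2}|\log \lambda|$ makes the series absolutely convergent. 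A direct computation yields the intertwining $A(y) H^s_{x_0, y} = H^s_{fx_0, fy} A(x_0)$, and an analogous construction gives unstable holonomies $H^u_{x_0, y}$. Setting $P(f^n x_0) := A^n(x_0)$ along the orbit of $x_0$, and then $P(y) := H^{s/u}_{f^n x_0, y} \cdot A^n(x_0)$ on local stable/unstable leaves through $f^n x_0$, the intertwining relation becomes exactly the coboundary equation $A = (P \circ f) P^{-1}$. The local product structure inside $\Lam_\ell$ together with $f$-equivariance allows pasting stable and unstable pieces into a measurable $P$ on a positive-measure neighborhood of $x_0$; ergodicity of $\mu$ then spreads $P$ to a set of full measure, and measurability follows from Rokhlin disintegration along the measurable Pesin laminations.

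\textbf{Main obstacle.} The crux is Step 1. While the Anosov closing lemma provides uniform shadowing across the whole manifold, Katok's version only works inside Pesin blocks, with shadowing and Hölder constants depending on the block index $\ell$. The key technical bookkeeping is to control the telescoped sum $\sum_{j=0}^{n_k-1} \d(f^j x, f^j p_k)^\alpha$ of Hölder fluctuations, quantitatively in $\ell$, so that the subexponential growth estimate obtained at the return times $n_k$ can be upgraded via Kingman to the vanishing of every Lyapunov exponent rather than only the top one. Once this is achieved, the remaining steps are Kalinin's construction executed fiberwise inside Pesin blocks.
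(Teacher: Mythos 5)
Your Step~1 is correct but is not where the paper's novelty lies: the paper simply invokes Theorem~1.4 of \cite{KS2} to conclude $\lambda_1(A,\mu)=\lambda_l(A,\mu)=0$ from the periodic orbit condition, so the Kingman/closing-lemma bookkeeping you flag as the ``main obstacle'' is already in the literature. The genuine difficulty, which you underestimate, is in your Steps~2--3, and there your route diverges sharply from the paper and has a real gap. The paper does \emph{not} build $P$ by pasting holonomy-transported values over a product neighborhood. Instead it picks one point $x$ in a Pesin block $G$ whose orbit is dense in $\supp(\mu)$ and whose $G$-returns are dense in $G$, sets $P(f^n x):=A^n(x)$ on that single orbit, and then uses the closing lemma together with \eqref{eq: POOC} to prove the quantitative return estimate $\norm{A^n(z)-\Id}\le\tilde C h^\alpha$ whenever $z,f^n(z)\in\O(x)\cap G$ are $\beta(h)$-close (Proposition~3.1). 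Combined with a uniform bound $\norm{A^m(x)}\le T$ for returns to $G$ (Corollary~3.2), this gives uniform continuity of $P$ on $\O(x)\cap G$, hence a continuous extension to $G$, and then the coboundary equation itself propagates $P$ to $\cup_j f^j(G)$, a full-measure set. Holonomies appear only in Theorem~2.

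The gap in your Steps~2--3 is consistency. After defining $P$ on the stable and unstable leaves through $\O(x_0)$ via the holonomy intertwining (which is fine on each single leaf), you ``paste'' to a positive-measure rectangle using $(z,w)\mapsto[z,w]$ and then ``spread by ergodicity.'' But for a point $y=[z,w]$ reachable from $x_0$ by two different $su$-paths (unstable to $z$ then stable to $y$, versus stable to $w$ then unstable to $y$), you would need $H^s_{z,y}H^u_{x_0,z}=H^u_{w,y}H^s_{x_0,w}$, i.e.\ commutation of the stable and unstable holonomies around the rectangle, and when spreading via $P(f^n y)=A^n(y)P(y)$ you additionally need the return-time identity $A^n(y)P(y)=P(f^n y)$ whenever both endpoints lie in your base set. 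Neither holds for a general cocycle with zero Lyapunov exponents; these are exactly the obstructions that the periodic orbit condition \eqref{eq: POOC} must be used to kill, yet after Step~1 you never invoke \eqref{eq: POOC} again. Without a lemma establishing this cycle-consistency (which would itself require another closing-lemma argument of the same flavor as the paper's Proposition~3.1), your construction yields a multi-valued object, not a transfer function. Note also that the paper's Theorem~\ref{theo: 1}, unlike Theorem~\ref{theo: regularity}, makes no local-product-structure hypothesis on $\mu$; the geometric $[\cdot,\cdot]$ structure of Pesin blocks that you need is available, but it is one more thing your write-up should separate cleanly from the measure-theoretic assumption used in the regularity theorem.
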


Moreover, we show that whenever the measure $\mu$ has \emph{local product structure}, any transfer map $P$ as in the previous theorem is H\"{o}lder continuous in sets of arbitrary large measure. More precisely,

\begin{theorem}\label{theo: regularity}
Let $f:M\to M$ be a $C^{1+\epsilon}$ diffeomorphism, $\mu$ a hyperbolic ergodic $f$-invariant measure with local product structure and $A:M \to \GL $ an $\alpha$-H\"{o}lder continuous map. Suppose there exists a measurable map $P:M\to GL(d,\R)$ satisfying
\begin{equation}\label{eq: trasnf map theo}
A(x)=P(f(x))P(x)^{-1} \text{ for $\mu$-almost every } x\in M.
\end{equation}
Then, for every $\varepsilon>0$ there exists a set $\Delta_\varepsilon\subset M$ with $\mu(\Delta_\varepsilon)>1-\varepsilon$ so that the map $P$ restricted to $\Delta_\varepsilon$ is $\alpha$-H\"{o}lder continuous.
\end{theorem}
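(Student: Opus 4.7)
The plan is to adapt Pollicott's strategy from the scalar case \cite{Pol05} to matrix-valued cocycles, using Kalinin's stable/unstable holonomies \cite{Kal11} as the matrix analogue of the ``telescoping sum'' argument along contracting leaves. Fix $\varepsilon>0$. The first step is to produce a compact set $\Delta_\varepsilon\subset M$ with $\mu(\Delta_\varepsilon)>1-\varepsilon$ enjoying simultaneously: (i) $\Delta_\varepsilon$ sits inside a Pesin block, so that local stable and unstable manifolds $\Ws(x),\Wu(x)$ have uniform size $r_0$ with uniform exponential contraction/expansion; (ii) by Lusin's theorem, $P|_{\Delta_\varepsilon}$ is uniformly continuous, hence bounded in operator norm by some $K<\infty$; (iii) by Birkhoff, $\mu$-a.e.\ $x\in\Delta_\varepsilon$ returns to $\Delta_\varepsilon$ with positive frequency under both $f$ and $f^{-1}$; and (iv) by the local product structure of $\mu$, for $x,y\in\Delta_\varepsilon$ with $\d(x,y)<r_0$ the bracket $[x,y]\in\Ws(x)\cap\Wu(y)$ is well-defined, and a Fubini-type argument shows that a substantial fraction of such brackets falls back into $\Delta_\varepsilon$.

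Next I would establish $\alpha$-Hölder continuity of $P$ along local stable leaves in $\Delta_\varepsilon$. For $x\in\Delta_\varepsilon$ and $y\in\Ws(x)\cap\Delta_\varepsilon$, iterating the cohomological equation yields
\[
A^n(y)^{-1}A^n(x)\;=\;P(y)\,P(f^n(y))^{-1}P(f^n(x))\,P(x)^{-1}.
\]
A Kalinin-type argument on the Pesin block shows that the stable holonomy of $A$,
\[
H^s_A(x,y)\;:=\;\lim_{n\to\infty}A^n(y)^{-1}A^n(x),
\]
exists for $y\in\Ws(x)$ and satisfies $\norm{H^s_A(x,y)-\Id}\le C\,\d(x,y)^\alpha$. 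Choosing a subsequence $n_k\to\infty$ with $f^{n_k}(x)\in\Delta_\varepsilon$ and exploiting the exponential decay $\d(f^{n_k}(x),f^{n_k}(y))\to 0$ together with uniform continuity of $P$ on a suitable stable-saturated thickening of $\Delta_\varepsilon$, one obtains $P(f^{n_k}(y))^{-1}P(f^{n_k}(x))\to\Id$. Passing to the limit in the displayed identity yields $H^s_A(x,y)=P(y)P(x)^{-1}$, whence
\[
\norm{P(y)-P(x)}\;\le\;\norm{P(x)}\,\norm{H^s_A(x,y)-\Id}\;\le\;KC\,\d(x,y)^\alpha.
\]
The analogous Hölder estimate along unstable leaves follows by applying the same argument to $f^{-1}$ and $A^{-1}$.

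The third step combines these two leaf-wise estimates via the local product structure. For $x,y\in\Delta_\varepsilon$ with $\d(x,y)<\delta_0$ small, set $z=[x,y]$; by (iv), after shrinking $\Delta_\varepsilon$ slightly (while keeping $\mu(\Delta_\varepsilon)>1-\varepsilon$) one may assume that $z$ lies in the locus where both leaf-wise Hölder bounds apply. Using $\d(x,z),\d(z,y)\le C'\d(x,y)$ (standard on a Pesin block), the telescoping $P(x)-P(y)=[P(x)-P(z)]+[P(z)-P(y)]$ combined with the stable bound for $(x,z)$ and the unstable bound for $(z,y)$ yields $\norm{P(x)-P(y)}\le C''\d(x,y)^\alpha$, completing the proof.

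The main obstacle I anticipate is justifying the limit step in the stable estimate: one needs $P(f^{n_k}(y))^{-1}P(f^{n_k}(x))\to\Id$ along a simultaneous recurrence subsequence, but $P$ is only directly controlled on $\Delta_\varepsilon$. The cleanest fix is either to select $n_k$ for which both $f^{n_k}(x)$ and $f^{n_k}(y)$ belong to $\Delta_\varepsilon$ (arguing that the stable saturation of $\Delta_\varepsilon$ has nearly full measure, which itself relies on local product structure), or to extend $P$ continuously from $\Delta_\varepsilon$ to its stable/unstable saturation before passing to the limit. This is precisely where the local product structure hypothesis on $\mu$, as opposed to mere non-uniform hyperbolicity, becomes indispensable.
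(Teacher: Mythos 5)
Your overall strategy --- reducing the H\"older estimate to stable and unstable leaf-wise estimates via the holonomies of the zero-exponent cocycle, then concatenating them through the local product structure --- is the same strategy the paper pursues. The leaf-wise identity $P(y)P(x)^{-1}=H^{s,A}_{xy}$ is obtained exactly as you describe, by passing to a subsequence of simultaneous returns to a set where $P$ is controlled; the paper organizes this through Lemma~\ref{l.abscont}, which records that the forward and backward return frequencies to $X_\varepsilon\cap\cO$ exceed $1/2$ not only $\mu$-a.e.\ but also $\mu^{s}_x$-a.e.\ and $\mu^{u}_x$-a.e.\ on local leaves, so two points on a common leaf automatically share such a subsequence. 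Your closing paragraph correctly identifies joint recurrence as something that must be arranged, and that concern is addressed by this Fubini-type lemma.

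The genuine gap is in your third step. You set $z=[x,y]$ and assert that ``after shrinking $\Delta_\varepsilon$ slightly'' one may assume $z$ lies in the locus where the leaf-wise estimates apply. That cannot be arranged: $\Delta_\varepsilon$ is a set of large $\mu$-measure, not a rectangle in the product coordinates on $\cN_x(\delta)$, so the bracket does not send $\Delta_\varepsilon\times\Delta_\varepsilon$ into $\Delta_\varepsilon$, and for a \emph{fixed} pair $(x,y)$ the single point $[x,y]$ carries no measure-theoretic weight at all. No shrinking of $\Delta_\varepsilon$ repairs this, because what you need is a property of the specific point $[x,y]$, not of a positive-measure set of points. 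This is exactly why the paper replaces your two-step chain $x\to[x,y]\to y$ by the \emph{four-step} chain of Lemma~\ref{l.3point}, $x\to x_1\to x_3\to x_2\to y$, in which the intermediate bracket is \emph{chosen} rather than prescribed: using Fubini for $\mu^u_z\times\mu^s_z$ one first finds $x_2\in\Ws(y)$ with good recurrence such that $\Wu(x_2)\cap\Delta_\varepsilon$ has positive $\mu^u_z$-mass; then, projecting that set by stable holonomy onto $\Wu(x)$ (positivity is preserved by local product structure) and applying Lemma~\ref{l.abscont} once more, one picks $x_1\in\Wu(x)$ with good recurrence so that $x_3=[x_1,x_2]$ actually lands in $\Delta_\varepsilon$. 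The four leaf-wise displacements are then each comparable to $\d(x,y)$ and each is estimated by a holonomy exactly as in your step two. In short: the holonomy mechanism and the leaf-wise reduction are right, but the ``corner'' point must be produced by a Fubini argument in the product measure; insisting on $z=[x,y]$ is a step that fails.
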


The precise definition of local product structure is given in Section~\ref{sec: more non-unif hyp}. For now, we would just like to stress that many important classes of measures satisfy this property. For instance, the Lebesgue measure (whenever it is hyperbolic) and equilibrium states associated to H\"{o}lder potentials and Axiom A diffeomorphisms have local product structure (see \cite{Lep00}).

Regarding the organization of the paper, Sections~\ref{sec: prelim} and \ref{sec:theo1} contain the proof of Theorem~\ref{theo: 1} while in Section~\ref{sec: holder reg} Theorem~\ref{theo: regularity} is proved. The paper is written in such a way that the reader interested in only one of the results can skip the other part.

\section{Preliminaries and Notations}\label{sec: prelim}

Let $M$ be a closed smooth manifold, $f:M\to M$ a $C^{1+\epsilon}$ diffeomorphism and $\mu$ an ergodic $f$-invariant measure. 

\subsection{Linear cocycles and Lyapunov exponents} \label{sec: Lyap exponents}
Given an integer $d\ge 1$, the \emph{linear cocycle generated by a matrix-valued map $A:M\rightarrow \GL$ over $f$} is the (invertible) map $F_A:M\times \R ^d\rightarrow M\times \R ^d$ defined by
\begin{equation}
\nonumber F_A\left(x,v\right)=\left(f(x),A(x)v\right).
\end{equation}
Its iterates are ${F}^n_A\left(x,v\right)=\left(f^n(x),A^n(x)v\right)$, where
\begin{equation*}\label{def:cocycles}
A^n(x)=
\left\{
	\begin{array}{ll}
		A(f^{n-1}(x))\cdots A(f(x))A(x)  & \mbox{if } n>0 \\
		{\rm Id} & \mbox{if } n=0 \\
		A(f^{n}(x))^{-1}\cdots A(f^{-1}(x))^{-1}& \mbox{if } n<0 .\\
	\end{array}
\right.
\end{equation*}
Sometimes we denote this cocycle by $(f,A)$ or simply by $A$, when there is no risk of ambiguity. A natural example of linear cocycle is given by the \textit{derivative cocycle}: the cocycle generated by $A(x)=Df(x)$ over $f$.

When $\log\norm{A}$ and $\log\norm{ A^{-1}}$ are both integrable, 
a famous theorem of Oseledets~\cite{Ose68} (see also \cite{LLE}) guarantees the existence of a full $\mu$-measure set $\Reg\subset M$, whose points are called \emph{$\mu$-regular}, such that for every $x\in \Reg$ there exist real numbers $\lambda_1 \left(A, x\right)>\cdots >\lambda_l\left(A, x\right)$ and a direct sum decomposition $\R^d=E^{1,A}_{x}\oplus \cdots \oplus E^{l,A}_{x}$ such that
\begin{displaymath}
A(x)E^{i,A}_{x}=E^{i,A}_{f(x)}  \textrm{  and  }
\lambda _i(A, x) =\lim _{n\rightarrow \infty} \dfrac{1}{n}\log \| A^n(x)v\| 
\end{displaymath}
for every non-zero $v\in E^{i,A}_{x}$ and $1\leq i \leq l$. Moreover, since $\mu$ is ergodic, the \textit{Lyapunov exponents} $\lambda _i(A, x)$ are constant on a full $\mu$-measure subset of $M$ (and thus we denote it just by $\lambda_i(A,\mu)$) as well as the dimensions of the \textit{Oseledets subspaces} $E^{i,A}_{x}$.
The dimension of $E^{i,A}_{x}$ is called the \emph{multiplicity} of $\lambda_i(A,\mu)$. When there is no risk of ambiguity, we suppress the index $A$ or even both $A$ and $\mu$ from the previous objects.

\subsection{Lyapunov norm} \label{sec: lyap norm} Let $x\in \Reg$ and $\varepsilon >0$. Given two vectors $u=u_1+\ldots +u_l $ and $v=v_1+\ldots +v_l $ in $\R ^d$ where $u_i, v_i \in E^{i,A}_x$ for every $1\leq i\leq l$, the \emph{$\varepsilon$-Lyapunov inner product} of $u$ and $v$ at $x$ is defined by 
\begin{displaymath}
\langle u,v\rangle _{x,\varepsilon}=d\sum _{i=1}^l\langle u_i,v_i \rangle _{x,\varepsilon,i}
\end{displaymath}
where 
\begin{equation}\label{eq: def Lyap i norm}
\langle u_i, v_i\rangle_{x,\varepsilon,i}= \sum _{n\in \mathbb{Z}}\langle A^{n}_i(x)u_i, A^{n}_i(x)v_i \rangle e^{-2\lambda _i n -2\varepsilon \mid n\mid}
\end{equation}
for every $i=1,\ldots ,l $. It follows from the Oseledets' theorem that this last series converge. We then define the \emph{$\varepsilon$-Lyapunov norm} $\norm{.}_{x,\varepsilon}$ associated to the cocycle $A$ at $x\in \Reg$ as the norm generated by $\langle \cdot ,\cdot \rangle _{x,\varepsilon}$. When there is no risk of ambiguity, we write $\norm{.}_x$ instead of $\norm{.}_{x,\varepsilon}$ and call it just \emph{Lyapunov norm}.

Given a liner map $B\in \GL$, its Lyapunov norm is defined for any regular points $x,y\in \Reg$ by
\begin{displaymath}
\norm{B}_{y\leftarrow x}=\sup \{\norm{Bu}_{y}/\norm{u}_{x}; \; u\in \R^d\setminus \{0\} \}.
\end{displaymath}  

Lyapunov norms are fundamental tools in the study of the asymptotic growths of a cocycle. The main properties of these objects that we are going to use in the sequel are the following (see \cite[Sections 3.5.1 to 3.5.3]{BP07} for a detailed discussion)
\begin{itemize}
\item For every $1\leq i\leq l$ and $u\in E^{i,A}_x$ we have
\begin{equation} \label{ineq: Lyapunov norm}
e^{(\lambda _i -\varepsilon)\mid n\mid}\norm{u}_{x}\leq \norm{A^n(x)u}_{f^n(x)}\leq e^{(\lambda _i +\varepsilon)\mid n\mid}\norm{u}_{x} 
\end{equation}
for every $n\in \Z$; 
\item There exists a measurable function $C_{\varepsilon}:\Reg \to (0,+\infty)$ such that
\begin{equation}\label{ineq: norm x Lyapunov norm}
\norm{u}\leq \norm{u}_x\leq C_{\varepsilon}(x)\norm{u}
\end{equation}
whose growth along any regular orbit is bounded; more precisely,
\begin{equation}\label{ineq: K delta growth}
C_{\varepsilon}(x)e^{-\varepsilon \mid n\mid}\leq C_{\varepsilon}(f^n(x))\leq C_{\varepsilon}(x)e^{\varepsilon \mid n \mid } \quad \forall n\in \Z;
\end{equation} 
\item In particular, 
\begin{equation} \label{ineq: norm growth}
\norm{A^n(x)u} \leq C_\varepsilon(x) e^{(\lambda _1 +\varepsilon)\mid n\mid}\norm{u} 
\end{equation}
for every $u\in \R^d$ and $n\in \Z$ and for any linear map $B$ and regular points $x$ and $y$,
\begin{equation}\label{ineq: norm x Lyapunov norm operator}
C_\varepsilon(x)^{-1}\norm{B}\leq \norm{B}_{y\leftarrow x}\leq C_\varepsilon(y)\norm{B}.
\end{equation}

\end{itemize}

For $N>0$, let $\Reg _{\varepsilon ,N}$ be the set of regular points $x\in \Reg$ for which $C_{\varepsilon}(x)\leq N$. Observe that $\mu(\Reg _{\varepsilon, N})\to 1$ as $N\to +\infty$. Moreover, invoking Lusin's theorem we may assume without loss of generality that this set is compact and that the Lyapunov norm and the Oseledets splitting are continuous when restricted to it.

\subsection{Non-uniformly hyperbolic systems}\label{sec: non unif hyp}

An $f$-invariant measure $\mu$ is said to be \textit{hyperbolic} if all Lyapunov exponents $\{\lambda _i(Df, \mu) \}_{i=1}^{l}$ are non-zero. When this happens, $(f,\mu)$ is called \textit{non-uniformly hyperbolic}. Given $\chi>0$, $\mu$ is called \textit{$\chi$-hyperbolic} if $0< \chi < \min \{ | \lambda _i(Df, \mu) | \colon 1\leq i\leq l\}$. Non-uniform hyperbolicity implies the existence of a very rich geometric structure of the dynamics of $f$, given by stable and unstable manifolds in the sense of Pesin (see Section \ref{sec: more non-unif hyp} where such properties are recalled). We now recall a result due to Katok \cite{Kat80} (see also Theorem 15.1.2 of \cite{BP07}) which describes one of the main properties of $f$ given by this geometric structure. In order to do so, let us denote by $\Lam_{\varepsilon,N}$ the set $\Reg_{\varepsilon,N}$ constructued in Section \ref{sec: lyap norm} associated to the cocycle generated by $A(x)=Df(x)$ over $f$. This set is usually called the \emph{Pesin set}.

\begin{theorem}[Closing Lemma for non-uniformly hyperbolic systems]\label{theo: closing lemma} 
Assume $\mu$ is $\chi$-hyperbolic and let $\varepsilon>0$ be sufficiently small when compared to $\chi$. Then, there are constants $C=C(\varepsilon,N) >0$ and $\eta=\eta(\varepsilon,N)\in (\varepsilon, \chi-\varepsilon)$ and for each $h>0$ there exists $\beta=\beta(h,\varepsilon,N)\in (0,h)$ so that if $y\in\Lam_{\varepsilon,N}$ satisfies $d(f^n(y),y)<\beta$ and $f^n(y)\in\Lam_{\varepsilon,N}$ then there exists a periodic point $p$ such that $f^n(p)=p$ and
\begin{displaymath}
d(f^i(y),f^i(p))\leq hC e^{-\eta \min\lbrace i, n-i\rbrace}
\end{displaymath}
for every $i=0,1,\ldots ,n$.
\end{theorem}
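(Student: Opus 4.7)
My plan is to follow the standard Pesin-theoretic route. On the Pesin set $\Lam_{\varepsilon, N}$ the Lyapunov norm supplies uniform hyperbolicity: the Oseledets splitting $T_xM = E^s_x \oplus E^u_x$ varies continuously on $\Lam_{\varepsilon, N}$, and by~\eqref{ineq: Lyapunov norm} $Df$ contracts $E^s_x$ by a factor at most $e^{-(\chi-\varepsilon)}$ and expands $E^u_x$ by a factor at least $e^{\chi-\varepsilon}$ in the Lyapunov norm. My first step would be to build Pesin (Lyapunov) charts $\Phi_x\colon B(0, r_N) \to M$ of uniform size $r_N = r(\varepsilon, N)$, depending continuously on $x \in \Lam_{\varepsilon, N}$, in which $f$ reads as $\widetilde f_x = L_x + \phi_x$ with $L_x$ block-hyperbolic and $\phi_x$ a nonlinear remainder whose Lipschitz constant can be made $O(\varepsilon)$ after rescaling.

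Next I would compose these charts along $y, f(y), \ldots, f^n(y)$ and represent $f^n$ near $y$ as a single map $F_n$ with linear part $L^{(n)} = L_{f^{n-1}(y)} \cdots L_y$. Although the intermediate iterates need not lie in $\Lam_{\varepsilon, N}$, inequality~\eqref{ineq: K delta growth} bounds $C_\varepsilon(f^i(y)) \leq N e^{\varepsilon i}$, and this sub-exponential blow-up is absorbed by the hyperbolic gap $e^{(\chi-\varepsilon) i}$ precisely because $\varepsilon$ is small relative to $\chi$. Since $d(f^n(y), y) < \beta$, the image $F_n(0)$ has Lyapunov norm $\leq c_1 \beta$ by~\eqref{ineq: norm x Lyapunov norm}, so $F_n$ is a small translate of a hyperbolic linear map on a ball whose Lyapunov radius is comparable to $r_N e^{-\varepsilon n}$. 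A Hadamard--Perron / graph-transform argument (equivalently, Banach fixed point applied to $F_n$ after splitting along $E^s_y \oplus E^u_y$) then produces a unique fixed point $\widehat p$ with $\|\widehat p\|_y \leq c_2 \beta$, and $p := \Phi_y(\widehat p)$ satisfies $f^n(p) = p$ together with $d(p, y) \leq c_3 \beta$.

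For the exponential shadowing, I would exploit that $p$ lies at the intersection of the local stable and local unstable manifolds through $y$ in the chart: for $0 \leq i \leq n/2$ the difference $f^i(p) - f^i(y)$ decays like $e^{-(\chi-\varepsilon) i}$ in Lyapunov norm from an initial size $\leq c_2 \beta$ (the unstable component shrinks forward after being trapped by the return), while for $n/2 \leq i \leq n$ the symmetric backward estimate, based on $f^n(p) = p$ and $d(f^n(y), y) < \beta$, gives decay like $e^{-(\chi-\varepsilon)(n-i)}$. Translating back to the Riemannian distance via~\eqref{ineq: norm x Lyapunov norm} and~\eqref{ineq: K delta growth}, and taking $\beta = \beta(h, \varepsilon, N)$ sufficiently small, produces the desired bound $d(f^i(y), f^i(p)) \leq hC e^{-\eta \min\{i, n-i\}}$ for any chosen $\eta \in (\varepsilon, \chi - \varepsilon)$.

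The main obstacle is the bookkeeping of constants along the orbit segment: the charts and the hyperbolicity estimates are uniform only \emph{at} points of $\Lam_{\varepsilon, N}$, yet the orbit of $y$ typically visits arbitrarily bad intermediate regions where $C_\varepsilon$ blows up. Dominating this sub-exponential blow-up by the hyperbolic rate is what forces the assumption that $\varepsilon$ is small compared to $\chi$, and requires choosing $\beta$ small as a function of $h, \varepsilon, N$ but \emph{uniformly in $n$}, so that both the fixed-point equation for $F_n$ remains inside the domain of the chart and the output point $p$ remains within the prescribed $hC$-tube of $y$.
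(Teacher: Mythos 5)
The paper does not prove this theorem; it is quoted as a known result of Katok~\cite{Kat80} and is Theorem~15.1.2 of~\cite{BP07}. Your sketch accurately recovers the standard Pesin-chart/graph-transform argument from those references, and it correctly isolates the central technical point: the subexponential growth of the regularity constant $C_\varepsilon(f^i(y))$ along the orbit segment must be dominated by the hyperbolic rate $e^{\chi-\varepsilon}$, which is precisely what forces $\varepsilon$ to be small relative to $\chi$ and $\beta$ to be chosen uniformly in $n$. The one imprecision is the assertion that $p$ lies at the intersection of the local stable and unstable manifolds through $y$ (that intersection is just $\{y\}$); the accurate formulation, which is what your shadowing estimate actually uses, is that the stable and unstable components of $f^i(p)-f^i(y)$ in the successive Lyapunov charts contract forward and backward respectively, so that the boundary conditions at $i=0$ and $i=n$ (both displacements of size $O(\beta)$) propagate inward to give the two-sided exponential bound $e^{-\eta\min\{i,n-i\}}$.
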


\section{Proof of Theorem \ref{theo: 1} }\label{sec:theo1}
Let $f:M\to M$ be a $C^{1+\epsilon}$ diffeomorphism, $\mu$ a $\chi$-hyperbolic ergodic $f$-invariant measure and $A:M\to \GL$ an $\alpha$-H\"{o}lder continuous map satisfying \eqref{eq: POOC}. Fix $\varepsilon'>0$ sufficiently small when compared to $\chi$ and $N'\in \N$ sufficiently large so that the Pesin set $\Lam_{\varepsilon',N'}$ has $\mu$-measure larger than $0.99$. Let $C>0$ and $\eta>0$ be given by the Closing Lemma \ref{theo: closing lemma} associated to these parameters. Fix $0<\varepsilon_0<\frac{1}{10}\min \{\alpha\eta,\varepsilon' \}$  and take $\varepsilon \in (0,\varepsilon_0)$ and $N\in \N$ sufficiently large so that the set $G=\Lam_{\varepsilon',N'}\cap \Reg_{\varepsilon,N}$ has $\mu$-measure larger than $0.9$.

We claim now that there exists $x\in \supp(\mu)$ so that $x\in G$, $\supp(\mu)\subset \overline{\O}(x)$ and $\O(x)\cap G$ is dense in $G$ where $\O(x)$ denotes the orbit of $x$ under $f$. Indeed, it is a classical fact that, in our setting, the set of points whose orbit is dense in $\supp(\mu)$ has full $\mu$-measure. Similarly for the first return map to $G$ and the restriction of $\mu$ to $G$. Thus, combining these facts we can get a point $x\in G$ with the desired properties. 

Fix $x\in M$ as above. We then define $P:\O(x) \to \GL $ as
\begin{displaymath}
P(f^n(x))=A^n(x) \text{ for every } n\in \N.
\end{displaymath}
It is easy to see that $P$ satisfies $A(z)=P(f(z))P(z)^{-1}$ for every $z\in \O(x)$. Our objective now is to prove that $P$ is uniformly continuous when restricted to $\O(x)\cap G$ so that it can be continuously extended to $\overline{\O(x)\cap G}=G$. Denote by $\overline{P}:G\to \GL$ such extesion restricted to $G$. Then, for $z\in f(G)\setminus G$ we define 
$$\overline{P}(z)=A(f^{-1}(z))\overline{P}(f^{-1}(z)).$$
Recursively, we define $\overline{P}$ via the same expression for $z\in \cup_{j=0}^{n}f^j(G)\setminus \cup_{j=0}^{n-1}f^j(G)$ for every $n\in \N$. Consequently, since $\mu(\cup_{j\in \N}f^j(G))=1$, we get a map $\overline{P}$ defined almost everywhere and satisfying 
$$A(x)=\overline{P}(f(x))\overline{P}(x)^{-1} \text{ for $\mu$-almost every } x\in M$$
as we claimed. So, all we have to do is to prove that $P$ is uniformly continuous when restricted to $\O(x)\cap G$. In order to do that, we are going to prove that there exists a constant $K>0$ so that for every $h\in (0,1)$ there exists $\beta=\beta(h) >0$ so that if $z,y\in \O(x)\cap G$ satisfy $d(y,z)<\beta$ then
\begin{equation}\label{eq: unif cont}
\norm{P(z)-P(y)}\leq Kh^\alpha.
\end{equation}

The next proposition is the main step in proving \eqref{eq: unif cont}. In order to prove it, we need the following simple observation: from Theorem 1.4 of \cite{KS2} and \eqref{eq: POOC} it follows that $\lambda_1(A,\mu)=0=\lambda_l(A,\mu)$. In particular, it follows from \eqref{ineq: norm growth} and the definition of $\Reg_{\varepsilon,N}$ that
\begin{equation}\label{ineq: norm growth G}
\norm{A^n(z)} \leq N e^{\varepsilon \mid n\mid} 
\end{equation}
for every $z\in G$ and $n\in \Z$.

\begin{proposition}\label{prop: unif cont points close}
Given $h\in (0,1)$, let $\beta>0$ be given by the Closing Lemma \ref{theo: closing lemma} associated to $C, \eta$ and $h$. Then, there exists $\tilde{C}>0$ independent of $h$ so that if $z,f^n(z)\in \O(x)\cap G$ satisfy $d(z,f^n(z))<\beta$ then
\begin{displaymath}
\norm{A^n(z)-\Id}\leq \tilde{C}h^\alpha.
\end{displaymath}
\end{proposition}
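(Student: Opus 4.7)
The starting move is to feed the parameter $h$ into the Closing Lemma~\ref{theo: closing lemma} and obtain a periodic point $p\in\text{Fix}(f^n)$ with $d(f^i(z),f^i(p))\le hCe^{-\eta\min\{i,n-i\}}$ for $0\le i\le n$. Hypothesis \eqref{eq: POOC} then gives $A^n(p)=\Id$, so the problem reduces to estimating $\norm{A^n(z)-A^n(p)}$. I would expand this difference via the standard cocycle telescoping
\begin{equation*}
A^n(z)-A^n(p)=\sum_{k=0}^{n-1}A^{n-k-1}(f^{k+1}(z))\bigl[A(f^k(z))-A(f^k(p))\bigr]A^k(p),
\end{equation*}
and combine $\alpha$-H\"older continuity of $A$ with the exponential shadowing coming from the Closing Lemma to obtain
\begin{equation*}
\norm{A(f^k(z))-A(f^k(p))}\le L_A(hC)^{\alpha}e^{-\alpha\eta\min\{k,n-k\}},
\end{equation*}
where $L_A$ is the H\"older constant of $A$.

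\smallskip

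The second step is to control the two cocycle factors flanking the H\"older term. The key preparatory input is the one the authors already highlight: from \cite{KS2} and \eqref{eq: POOC} one has $\lambda_1(A,\mu)=\lambda_l(A,\mu)=0$, which combined with \eqref{ineq: norm growth} gives $\norm{A^m(w)}\le Ne^{\varepsilon|m|}$ for every $w\in G$. Since \emph{both} $z$ and $f^n(z)$ lie in $G$, the growth estimate \eqref{ineq: K delta growth} sharpens to the two-sided bound $C_{\varepsilon}(f^j(z))\le Ne^{\varepsilon\min\{j,n-j\}}$ for $0\le j\le n$, which together with the Lyapunov-norm estimate \eqref{ineq: Lyapunov norm} controls $\norm{A^{n-k-1}(f^{k+1}(z))}$. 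The delicate factor is $\norm{A^k(p)}$, because $p$ is not a priori in any Pesin set. For it I would run a Gronwall-type induction in the Lyapunov norm along the $z$-orbit: compare $\norm{A^k(p)u}_{f^k(z)}$ to $\norm{A^k(z)u}_{f^k(z)}\le e^{\varepsilon k}\norm{u}_z$ and absorb the defect at each step into quantities of the form $C_{\varepsilon}(f^{j+1}(z))\norm{A(f^j(z))-A(f^j(p))}$, which have precisely the shape that appears in the final telescoping sum.

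\smallskip

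The main obstacle---and the entire reason for having chosen $\varepsilon_0<\tfrac{1}{10}\min\{\alpha\eta,\varepsilon'\}$ at the outset---is to balance the exponentially growing Lyapunov-to-standard conversion factors $e^{\varepsilon\min\{k,n-k\}}$ against the exponential decay $e^{-\alpha\eta\min\{k,n-k\}}$ produced by the shadowing. Once all the estimates are assembled, each term in the telescoping is dominated by a constant multiple of $h^{\alpha}e^{(\varepsilon-\alpha\eta)\min\{k,n-k\}}$, and the geometric series $\sum_{k=0}^{n-1}e^{(\varepsilon-\alpha\eta)\min\{k,n-k\}}\le 2/(1-e^{\varepsilon-\alpha\eta})$ is bounded by a constant depending only on $\varepsilon,\alpha,\eta$. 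Packaging $N$, $L_A$, $C$ and this geometric constant into $\tilde C$ yields $\norm{A^n(z)-\Id}\le\tilde C h^{\alpha}$, with $\tilde C$ independent of $h$, as required.
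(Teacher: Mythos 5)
Your proposal has the right ingredients (the Closing Lemma, reducing to $\norm{A^n(z)-A^n(p)}$, the shadowing estimate, zero Lyapunov exponents for $A$, and the Gronwall-type control of $\norm{A^k(p)}$, which is essentially the paper's Lemma~\ref{lem: domination}), but there is a genuine gap in the telescoping step, and the paper's proof is structured precisely to avoid it.

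In your full-range telescoping
\begin{equation*}
A^n(z)-A^n(p)=\sum_{k=0}^{n-1}A^{n-k-1}(f^{k+1}(z))\bigl[A(f^k(z))-A(f^k(p))\bigr]A^k(p),
\end{equation*}
the left factor $A^{n-k-1}(f^{k+1}(z))$ spans from $f^{k+1}(z)$ to $f^n(z)$, which for small $k$ is almost the entire excursion between the two visits to the Pesin block. The best control the machinery gives is
\begin{equation*}
\norm{A^{n-k-1}(f^{k+1}(z))}\leq C_\varepsilon(f^{k+1}(z))\,e^{\varepsilon(n-k-1)}\leq N e^{\varepsilon\min\{k+1,n-k-1\}}\,e^{\varepsilon(n-k-1)},
\end{equation*}
which for $k$ near $0$ is of order $Ne^{\varepsilon n}$. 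Meanwhile the H\"older/shadowing factor $h^\alpha e^{-\alpha\eta\min\{k,n-k\}}$ is of order $h^\alpha$ (not small) for $k$ near $0$, and the Gronwall bound on $\norm{A^k(p)}$ is of order $1$ there. So the $k=0,1,\ldots$ terms of your sum contain an uncompensated factor $e^{\varepsilon n}$, and your claimed bound $h^\alpha e^{(\varepsilon-\alpha\eta)\min\{k,n-k\}}$ per term does not follow. The sum can blow up as $n\to\infty$.

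The paper sidesteps this by splitting the orbit at its midpoint. Writing $n=2m$ and using $A^{2m}(p)=\Id$, so $A^m(f^m(p))^{-1}A^m(p)^{-1}=\Id$, one inserts the identity in the middle of $A^{2m}(z)$ and reduces to two separate estimates: $\norm{A^m(p)^{-1}A^m(z)-\Id}\lesssim h^\alpha$, telescoped from $j=0$ to $m-1$ starting at $z\in G$, and $\norm{A^m(f^m(z))A^m(f^m(p))^{-1}-\Id}\lesssim h^\alpha$, telescoped backward from $f^{2m}(z)\in G$. In each half, every cocycle factor appearing (e.g.\ $A^j(p)^{-1}$, $A^j(z)$) spans at most distance $j\leq m$ from the nearest endpoint in $G$, so its norm is bounded by $\approx e^{3\varepsilon j}$, which the shadowing decay $e^{-\alpha\eta j}$ dominates since $10\varepsilon<\alpha\eta$. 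The two halves are then glued with a bounded factor ($\norm{A^m(p)^{-1}A^m(z)}\leq\tilde C_1+1$). This split-in-the-middle device is the missing idea; without it, your telescoping does not close.
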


\begin{proof} We are going to consider the case when $n=2m$ for some $m\in \N$. The case when $n$ is odd is similiar. By the Closing Lemma \ref{theo: closing lemma} there exists a periodic point $p\in M$ such that $f^{2m}(p)=p$ and
\begin{displaymath}
d(f^i(z),f^i(p))\leq hC e^{-\eta \min\lbrace i, 2m-i\rbrace}
\end{displaymath}
for every $i=0,1,\ldots ,2m$. In particular, 
\begin{displaymath}
d(f^i(z),f^i(p))\leq hC e^{-\eta i} \text{ and } d(f^{2m-i}(z),f^{2m-i}(p))\leq hC e^{-\eta i}
\end{displaymath}
for every $i=0,1,\ldots ,m$. We will need the next auxiliary result.

\begin{lemma}\label{lem: domination}
There exists a constant $L>0$ independent of $z,p$ and $m$ so that 
\begin{displaymath}
\norm{A^i(p)^{-1}}\leq L e^{2\varepsilon i} \text{ and }\norm{A^i(f^{2m-i} (p))^{-1}}\leq L e^{2\varepsilon i}
\end{displaymath}
for every $i=0,1,\ldots ,m$.
\end{lemma}
\begin{proof}[Proof of Lemma \ref{lem: domination}] We prove only the first inequality. The second one is similar. Fix $i\in \{0,1,\ldots ,m\}$ and for each $j\in \{0,1,\ldots,i\}$ let us consider $B_j=A(f^j(p))^{-1}-A(f^j(z))^{-1}$. Our first objective is to estimate $\norm{A^i(p)^{-1}}_{z\leftarrow f^i(z)}$. We start observing that
\begin{displaymath}
\begin{split}
\norm{A^i(p)^{-1}}_{z\leftarrow f^i(z)}&=\norm{A(p)^{-1}\cdot \ldots \cdot A(f^{i-1}(p))^{-1}}_{z\leftarrow f^i(z)}\\
&= \norm{ (A(z)^{-1}+B_0) \cdot \ldots \cdot  (A(f^{i-1}(z))^{-1}+B_{i-1}) }_{z\leftarrow f^i(z)}\\
&\leq \norm{A(z)^{-1}+B_{0} }_{z\leftarrow f(z)}\cdot \ldots \cdot \norm{A(f^{i-1}(z))^{-1}+B_{i-1}  }_{f^{i-1}(z)\leftarrow f^i(z)}.
\end{split}
\end{displaymath}

Now, since $A$ is $\alpha$-H\"{o}lder, there exists a constant $L_1=L_1(A)>0$ so that
\begin{displaymath}
\begin{split}
\norm{B_j}& \leq L_1 d(f^j(z),f^j(p))^\alpha\leq L_1 h^\alpha C^\alpha e^{-\eta \alpha j}
\end{split}
\end{displaymath}
which combined with \eqref{ineq: norm x Lyapunov norm operator} and \eqref{ineq: K delta growth} gives us
\begin{equation*}
\begin{split}
\norm{B_j}_{f^{j}(z)\leftarrow f^{j+1}(z)} \leq C_\varepsilon(f^{j}(z))\norm{B_j} \leq NL_1 h^\alpha C^\alpha e^{(\varepsilon-\eta \alpha) j}.
\end{split}
\end{equation*}
Moreover, recalling that $\lambda_1(A,\mu)=0=\lambda_l(A,\mu)$ and using \eqref{ineq: Lyapunov norm} we have that
\begin{displaymath}
\norm{A(f^{j}(z))^{-1}}_{f^{j}(z)\leftarrow f^{j+1}(z)} \leq e^{\varepsilon}.
\end{displaymath}
Thus, combining the previous two inequalities 
\begin{displaymath}
\begin{split}
\norm{A(f^{j}(z))^{-1}+B_{j}}_{f^{j}(z)\leftarrow f^{j+1}(z)}& \leq \norm{A(f^{j}(z))^{-1}}_{f^{j}(z)\leftarrow f^{j+1}(z)} +\norm{B_{j}}_{f^{j}(z)\leftarrow f^{j+1}(z)}\\
&\leq e^{\varepsilon } +N L_1h^\alpha C^\alpha e^{(\varepsilon-\eta \alpha) j}  \\
& = e^{\varepsilon} (1+ e^{-\varepsilon}NL_1 h^\alpha C^\alpha e^{(\varepsilon-\eta \alpha) j}).
\end{split}
\end{displaymath}
Making $L_2= e^{-\varepsilon}NL_1 h^\alpha C^\alpha$ and using the fact that $1+y\leq e^y$ for every $y\geq 0$ it follows that
\begin{displaymath}
\norm{A(f^{j}(z))^{-1}+B_{j}}_{f^{j}(z)\leftarrow f^{j+1}(z)} \leq e^{\varepsilon} \exp (L_2 e^{(\varepsilon-\eta \alpha) j}).
\end{displaymath}
Consequently,
\begin{displaymath}
\begin{split}
\norm{A^i(p)^{-1}}_{z\leftarrow f^i(z)}& \leq \prod _{j=0} ^{i-1}  e^{\varepsilon} \exp (L_2 e^{(\varepsilon-\eta \alpha) j}) \\
&= e^{\varepsilon i}   \exp \left( L_2 \sum _{j=0}^{i-1} e^{(\varepsilon-\eta \alpha) j} \right).
\end{split}
\end{displaymath}
Thus, recalling that $\varepsilon-\eta \alpha <0$ and making $L_3=\exp \left( L_2 \sum _{j=0}^{\infty} e^{(\varepsilon-\eta \alpha) j}) \right)$ we get that 
\begin{equation*}
\norm{A^i(p)^{-1}}_{z\leftarrow f^i(z)} \leq L_3 e^{\varepsilon i}
\end{equation*}
for every $i\in \{0,1,\ldots,m\}$. Now, from \eqref{ineq: norm x Lyapunov norm operator} and \eqref{ineq: K delta growth} it follows that
\begin{displaymath}
\norm{A^i(p)^{-1}}\leq C_\varepsilon(f^i(z))\norm{A^i(p)^{-1}}_{z\leftarrow f^i(z)} \leq C_{\varepsilon}(z)e^{i\varepsilon} \norm{A^i(p)^{-1}}_{z\leftarrow f^i(z)}.
\end{displaymath}
Thus, taking $L=L_3N$ and recalling that $C_\varepsilon(z)\leq N$ (once $z\in G$) it follows that
\begin{displaymath}
\norm{A^i(p)^{-1}}\leq L e^{2\varepsilon i}
\end{displaymath}
as claimed.
\end{proof}

Going back to the proof of Proposition \ref{prop: unif cont points close}, we start observing that 
\begin{equation}\label{eq: auxil 1}
\norm{A^m(p)^{-1}A^m(z)-\Id}\leq \tilde{C}_1h^\alpha
\end{equation}
for some $\tilde{C_1}>0$. In fact,
\begin{displaymath}
\norm{A^m(p)^{-1}A^m(z)-\Id}
\end{displaymath}
is smaller than or equal to
\begin{equation*}
 \sum_{j=0}^{m-1} \norm{A^{j}(p)^{-1}A^{j}(z)-A^{j+1}(p)^{-1}A^{j+1}(z)}.
\end{equation*}
By the cocycle property the previous expression is equal to
\begin{equation*}
\sum_{j=0}^{m-1} \norm{A^{j}(p)^{-1}\left(\Id-A(f^j(p))^{-1}A(f^j(z))\right) A^{j}(z)}  \end{equation*}
which by the property of the norm is smaller than or equal to
\begin{displaymath}
\sum ^{m-1}_{j=0} \norm{A^{j}(p)^{-1}}\norm{A^{j}(z)} \norm{\Id-A(f^j(p))^{-1}A(f^j(z))}.
\end{displaymath}
Now, since $A$ is $\alpha$-H\"{o}lder continuous, there exist a constant $\hat{C}=\hat{C}(A)>0$ such that 
$$\norm{\Id-A(f^j(p))^{-1}A(f^j(z))} \leq \hat{C}d(f^j(z),f^j(p))^\alpha \leq \hat{C} h^\alpha C^\alpha e^{-\eta \alpha  j}$$
for every $j=0,1, \ldots ,m$. Plugging it to the previous expression and using Lemma \ref{lem: domination} and \eqref{ineq: norm growth G} it follows that 
\begin{displaymath} 
\norm{A^m(p)^{-1}A^m(z)-\Id}
\leq \sum ^{m}_{j=0}L e^{2\varepsilon j}N e^{\varepsilon j} \hat{C} h^\alpha C^\alpha e^{-\eta \alpha j}.
\end{displaymath}
Thus, taking $\tilde{C}_1= \sum ^{\infty}_{j=0}L N  \hat{C}  C^\alpha e^{(3\varepsilon -\eta \alpha ) j} $ (recall that $0<10\varepsilon <\eta \alpha$) the claim follows. In particular, for $h\in (0,1)$,
\begin{equation}\label{eq: auxil bound}
\norm{A^m(p)^{-1}A^m(z)}\leq \tilde{C}_1+1.
\end{equation}
Similarly, using the second inequality in Lemma \ref{lem: domination} and the fact that $f^{2m}(z)\in G$,
\begin{equation}\label{eq: auxil 2}
\norm{A^m(f^m(z))A^m(f^m(p))^{-1}-\Id} \leq \tilde{C}_2 h^\alpha.
\end{equation}
Now, since $A^{2m}(p)=\Id$ we get that $A^m(f^m(p))^{-1}A^m(p)^{-1}=\Id$. Consequently, combining it with \eqref{eq: auxil 1}, \eqref{eq: auxil bound} and \eqref{eq: auxil 2}  we get that
\begin{displaymath}
\begin{split}
\norm{A^{2m}(z)-\Id}&= \norm{A^m(f^m(z))A^m(f^m(p))^{-1} A^m(p)^{-1}A^m(z)-\Id}\\
&\leq \norm{A^m(f^m(z))A^m(f^m(p))^{-1} A^m(p)^{-1}A^m(z)-A^m(p)^{-1}A^m(z) } \\
& + \norm{ A^m(p)^{-1}A^m(z)-\Id }\\
&\leq \norm{A^m(p)^{-1}A^m(z)}\norm{A^m(f^m(z))A^m(f^m(p))^{-1} -\Id } \\
&+ \norm{ A^m(p)^{-1}A^m(z)-\Id }\\
&\leq (\tilde{C}_1+1)\tilde{C}_2h^\alpha +\tilde{C}_1h^\alpha=\tilde{C}h^\alpha
\end{split}
\end{displaymath}
with $\tilde{C}=\tilde{C}_1+(\tilde{C}_1+1)\tilde{C}_2$ as claimed.
\end{proof}

\begin{corollary}\label{cor:bound T}
There exists a constant $T>0$ so that if $f^m(x)\in G$ then
\begin{displaymath}
\norm{A^m(x)}\leq T.
\end{displaymath}
\end{corollary}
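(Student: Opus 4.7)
The plan is to exploit Proposition \ref{prop: unif cont points close} at one fixed scale together with compactness of $G$ and density of $\O(x)\cap G$ in $G$, reducing an arbitrary return time $m$ with $f^m(x)\in G$ to one of finitely many ``reference'' return times $k_i$ at which $\norm{A^{k_i}(x)}$ is controlled by hand.

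First I fix any $h\in(0,1)$ and let $\beta=\beta(h)>0$ be supplied by the Closing Lemma \ref{theo: closing lemma}. Since $G$ is compact (as recalled at the end of Section \ref{sec: lyap norm}) and $\O(x)\cap G$ is dense in $G$, I can pick finitely many positive integers $k_1<\cdots<k_r$ with $f^{k_i}(x)\in G$ such that $\{f^{k_i}(x)\}_{i=1}^r$ forms a $\beta$-net in $G$. Set $K:=k_r$, $M:=\max_{1\le i\le r}\norm{A^{k_i}(x)}$ and $M_0:=\max_{0\le j\le K}\norm{A^j(x)}$; all three are finite constants depending only on $x$, $A$ and $\beta$.

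Now take any $m$ with $f^m(x)\in G$. If $m\le K$ then $\norm{A^m(x)}\le M_0$ trivially. If $m>K$, choose $i$ with $d(f^{k_i}(x),f^m(x))<\beta$ and apply Proposition \ref{prop: unif cont points close} with $z=f^{k_i}(x)$ and $n=m-k_i>0$: since $z$ and $f^n(z)=f^m(x)$ both lie in $\O(x)\cap G$ at distance less than $\beta$, the proposition gives $\norm{A^{m-k_i}(f^{k_i}(x))-\Id}\le\tilde C h^\alpha\le\tilde C$. The cocycle identity $A^m(x)=A^{m-k_i}(f^{k_i}(x))\,A^{k_i}(x)$ then yields $\norm{A^m(x)}\le(\tilde C+1)M$, so $T:=\max\{M_0,(\tilde C+1)M\}$ works. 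I do not anticipate any real obstacle; the only subtlety is that the $\beta$-net must consist of actual iterates of $x$ inside $G$ (so that Proposition \ref{prop: unif cont points close} is applicable), which is precisely what the density of $\O(x)\cap G$ in $G$ provides.
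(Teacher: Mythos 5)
Your proof is correct and takes essentially the same route as the paper's: use compactness of $G$ together with density of $\O(x)\cap G$ to extract a finite $\beta$-net of iterates, bound $\norm{A^m(x)}$ trivially for $m$ up to the largest reference time, and for larger $m$ decompose via the cocycle identity and apply Proposition~\ref{prop: unif cont points close} to the tail. The only cosmetic difference is that the paper takes a $\beta/2$-subcover and bounds the initial segment by $\max_{t\le i_N}\max_{y\in M}\norm{A^t(y)}$ rather than just at $x$, but the argument is the same.
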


\begin{proof} Since $G$ is compact and $\mathcal{O}(x)\cap G$ is dense in it there exists a subsequence $i_1<\ldots<i_N $ so that $\{B(f^{i_j}(x),\frac{\beta}{2})\}_{j=0}^{N}$ is an open cover of $G$ and $f^{i_j}(x)\in G$ for every $j=0,1,\ldots,N$. Let $T_1=\max_{t=0,1,2,\ldots,i_N}\{\max_{y\in M}\norm{A^t(y)}\}$ and $T=T_1(\tilde{C}+1)$. Thus, if $m\leq i_N$ then $\norm{A^m(x)}\leq T_1\leq T$ as claimed. Suppose $m>i_N$. Since $\{B(f^{i_j}(x),\frac{\beta}{2})\}_{j=0}^{N}$ is an open cover of $G$ and $f^{m}(x)\in G$, there exists $j\in \{0,1,\ldots,N\}$ so that $d(f^{i_j}(x),f^{m}(x))<\beta$. In particular, by Proposition \ref{prop: unif cont points close},
$$\norm{A^{m-i_j}(f^{i_j}(x))}\leq \tilde{C}+1.$$
Consequently,
\begin{displaymath}
\begin{split}
\norm{A^m(x)}&\leq \norm{A^{m-{i_j}}(f^{i_j}(x))A^{i_j}(x)} \\
&\leq \norm{A^{m-{i_j}}(f^{i_j}(x))}\norm{A^{i_j}(x)}\leq T
\end{split}
\end{displaymath} 
as claimed.
\end{proof}

Let $z,y\in \O(x)\cap G$ be such that $d(y,z)<\beta$. In particular, there exist $m,n\in \N$ so that $z=f^m(x)$ and $y=f^n(x)$ and we may assume without loss of generality that $n>m$. Thus, using Corollary~\ref{cor:bound T} and Proposition \ref{prop: unif cont points close},
\begin{displaymath}
\begin{split}
\norm{P(y)-P(z)}&=\norm{P(f^n(x))-P(f^m(x))}=\norm{A^n(x)-A^m(x)}\\
&=\norm{A^m(x)}\norm{A^{n-m}(f^m(x))-\Id} \leq T\tilde{C}h^\alpha =Kh^\alpha
\end{split}
\end{displaymath}
where $K=T\tilde{C}$, proving \eqref{eq: unif cont} and concluding the proof of Theorem \ref{theo: 1}. \qed

\section{H\"older continuity on large sets} \label{sec: holder reg}

In this section we prove Theorem \ref{theo: regularity}. This is a counterpart to the results of \cite{Pol05} where the case of cocycles taking values in $(\R,+)$ or any compact group was considered. Let $f:M\to M$ be a $C^{1+\epsilon}$ diffeomorphism, $\mu$ a hyperbolic ergodic $f$-invariant measure and $A:M \to \GL $ an $\alpha$-H\"{o}lder continuous map. We start by recalling some useful constructions and results from \cite{almost}.

\subsection{More on non-uniformly hyperbolic systems}\label{sec: more non-unif hyp} By Pesin's stable manifold theorem (see \cite{BP07}), there exists a full $\mu$-measure set $H(\mu) \subset M$ so that through every point $x\in H(\mu)$ there exist $C^1$ embedded disks $W^s_{\text{\rm loc}}(x)$ and $W^u_{\text{\rm loc}}(x)$, called \textit{local stable and unstable sets} at $x$, such that
\begin{itemize}
\item[i)] $W^s_{\text{\rm loc}}(x)$ is tangent to $E^s_x$ and $W^u_{\text{\rm loc}}(x)$ is tangent to $E^{u}_x$ where
$$E^s_x=\bigoplus_{\lambda _i(Df, \mu)<0}E^{i,Df}_x  \text{ and } E^u_x=\bigoplus_{\lambda _i(Df, \mu)>0}E^{i,Df}_x; $$
\item[ii)] given $0<\tau _x <\min _{1\leq i\leq k}| \lambda _i(Df,\mu)|$ there exists $C_x>0$ such that 
$$
\left\{
\begin{array}{rl}
d(f^n(y),f^n(z))\leq C_xe^{-\tau _x n}d(y,z) & ,\forall y,z\in W^s_{\text{\rm loc}}(x),\forall n\geq 0,\\
d(f^{-n}(y),f^{-n}(z))\leq C_xe^{-\tau _x n}d(y,z) & ,\forall y,z\in W^u_{\text{\rm loc}}(x),\forall n\geq 0;
\end{array}
\right.
$$
\item[iii)] $f(W^s_{\text{\rm loc}}(x))\subset W^s_{\text{\rm loc}}(f(x))$ and $f(W^u_{\text{\rm loc}}(x)) \supset W^u_{\text{\rm loc}}(f(x))$;
\item[iv)] $W^s(x)=\bigcup _{n=0}^{\infty} f^{-n}(W^s_{\text{\rm loc}}(f^n(x)))$ and $W^u(x)=\bigcup _{n=0}^{\infty} f^{n}(W^u_{\text{\rm loc}}(f^{-n}(x)))$.
\end{itemize}
Moreover, $W^s_{\text{\rm loc}}(x)$ and $W^u_{\text{\rm loc}}(x)$ depend measurably on $x$, as $C^1$ embedded disks, as well as the constants $\tau _x$ and $C_x$. By Lusin's theorem, we may find compact \textit{hyperbolic blocks} $\mathcal{H}(C,\tau)$, whose measure can be made arbitrarily close to 1 by increasing $C$ and decreasing $\tau$, such that in $\mathcal{H}(C,\tau)$ the sets $W^s_{\text{\rm loc}}(x)$ and $W^u_{\text{\rm loc}}(x)$ vary continuously,
$\tau _x>\tau \text{ and } C_x<C$.
In particular, the sizes of $W^s_{\text{\rm loc}}(x)$ and $W^u_{\text{\rm loc}}(x)$ are uniformly bounded from zero on $\mathcal{H}(C,\tau)$, as well as the angles between these disks.
The drawback on this argumentation is that  $\mathcal{H}(C,\tau)$ is in general not $f$-invariant. 

Given $x\in \cHCT$ and $\delta=\delta(C,\tau)>0$ small enough, for every $y\in \overline{B(x,\delta)}\cap \cHCT$, $W^u_{\text{loc}}(y)$ intersects $W^s_{\text{loc}}(x)$ at exactly one point and similarly $W^u_{\text{loc}}(x)$ intersects $W^s_{\text{loc}}(y)$ at exactly one point. Let
 \begin{displaymath}
 \mathcal{N}^u_x(\delta)\subset W^u_{\text{loc}}(x) \text{ and } \mathcal{N}^s_x(\delta)\subset W^s_{\text{loc}}(x) 
 \end{displaymath}
 be the compact sets of all intersection points obtained by the previous procedure when $y$ varies in $\mathcal{H}(C,\tau)\cap \overline{B(x,\delta)}$. Reducing $\delta >0$ if necessary, $W^s_{\text{loc}}(z)\cap W^u_{\text{loc}}(w)$ consists of exactly one point $[z,w]$ for every $z\in \mathcal{N}^u_x(\delta)$ and $w\in \mathcal{N}^s_x(\delta)$. Let $\mathcal{N}_x(\delta)$ be the image of $\mathcal{N}^u_x(\delta)\times \mathcal{N}^s_x(\delta)$ under the map
 \begin{equation}\label{eq: f local product struture}
 (z,w)\to [z,w].
 \end{equation}
 By construction, $ \mathcal{N}_x(\delta)$ contains $\overline{B(x,\delta)}\cap \mathcal{H}(C,\tau)$ and is homeomorphic, via \eqref{eq: f local product struture},
 to $\mathcal{N}^u_x(\delta)\times \mathcal{N}^s_x(\delta)$.

We say that $\mu$ has \emph{local product structure} if for every point $x\in \text{supp}(\mu)$ and every small $\delta >0$, the measure $\mu_x=\mu\mid \cN_x(\delta)$ is equivalent to the product $\mu^u_x\times\mu_x^s$ where $\mu_x^u$ and $\mu_x^s$ are the projections of $\mu_x $ to $\mathcal{N}^u_x(\delta)$ and $\mathcal{N}^s_x(\delta)$, respectively.

\subsection{Stable and Unstable Holonomies} Suppose $\mu$ has local product structure. Let $\cHCT$ be a hyperbolic block with constants $C>0$ and $\tau >0$ as in the previous section. Given $N\geq 1$ and $\theta>0$, let $\cDN$ be the set of points $x\in M$ satisfying
\begin{equation}
\prod_{j=0}^{k-1}\norm{A^N(f^{jN}(x)}\norm{A^N(f^{jN}(x)^{-1}}\leq e^{\theta kN},\quad\text{ for all } k\geq1,
\end{equation} 
and the dual condition which is obtained replacing $f$ and $A$ by its inverses.

A key notion that we are going to use is that of \emph{s-dominitation}: given $s\geq 1$, we say that $x\in M$ is $s$-\emph{dominated for} $A$ if $x\in\cHCT\cap \cDN$ for some $C,\tau,N,\theta$ with $s\theta<\tau$.

As a first result we have that almost every point with small Lyapunov exponents is dominated. More precisely,

\begin{lemma}[Corollary 2.4 of \cite{almost}] \label{c.dominated}
Given $\theta>0$ and $\lambda \geq 0$ satisfying $\theta>d\lambda\geq 0$, then $\mu$-almost every $x\in M$ with $\lambda_1(A,x)\leq \lambda$ is in $\cDN$ for some $N\geq 1$. In particular, $\mu$-almost every $x\in M$ with $\lambda_1(A,x)=0$ is $s$-dominated for $A$, for every $s\geq 1$.
\end{lemma}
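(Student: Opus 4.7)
The plan is to combine Kingman's subadditive ergodic theorem with a Hopf-style maximal ergodic argument, so that the asymptotic Lyapunov control gets upgraded to the uniform pointwise-in-$k$ bound defining $\cDN$. By Kingman applied to the subadditive cocycles $n\mapsto\log\|A^n(x)\|$ and $n\mapsto\log\|(A^n(x))^{-1}\|$, the averages $\tfrac{1}{n}\log\|A^n(x)\|$ and $\tfrac{1}{n}\log\|(A^n(x))^{-1}\|$ converge $\mu$-a.e.\ and in $L^1$ to $\lambda_1(A,x)$ and $-\lambda_l(A,x)$ respectively. Setting $\Phi_N(x)=\log\|A^N(x)\|+\log\|(A^N(x))^{-1}\|$, its Birkhoff average along $f^N$ converges a.e.\ to $\lambda_1-\lambda_l$. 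Combining the hypothesis $\lambda_1\le\lambda$ with the deterministic estimate $\|B^{-1}\|\le\|B\|^{d-1}/|\det B|$ and Birkhoff for the integrable $\log|\det A|$, this limit is bounded by $d\lambda<\theta$. Fix any $\theta_0\in(d\lambda,\theta)$ and choose $N$ so large that $\int\Phi_N\,d\mu<\theta_0 N$.

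Next, apply Hopf's maximal ergodic theorem to the integrable function $\Phi_N-\theta_0 N$ viewed as an observable of $f^N$: its strictly negative integral forces the ``good'' set on which every forward partial sum $\sum_{j=0}^{k-1}(\Phi_N\circ f^{jN}-\theta_0 N)\le 0$ to have positive $\mu$-measure, and this set realises the forward half of $\mathcal{D}_A(N,\theta_0)$. Running the same argument for the pair $(f^{-1},A^{-1})$ produces the dual backward condition on a set of positive measure, so intersecting yields $\mu(\mathcal{D}_A(N,\theta_0))>0$.

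To promote positive measure to coverage of $\mu$-almost every admissible point by some $\mathcal{D}_A(N',\theta)$, I would invoke Poincar\'e recurrence for $f^N$: $\mu$-a.e.\ admissible $x$ has some iterate $y=f^{mN}(x)\in\mathcal{D}_A(N,\theta_0)$, and then submultiplicativity $\Phi_{qN}(x)\le\sum_{i=0}^{q-1}\Phi_N(f^{iN}x)$ shows that the partial sums of $\Phi_N$ along the orbit of $x$ itself exceed $\theta_0 kN$ only by a bounded additive constant arising from the initial block of length $m$. Passing to $N'=qN$ with $q=q(x)$ large enough that this constant is absorbed into the slack $(\theta-\theta_0)N'$ places $x$ itself inside $\mathcal{D}_A(N',\theta)$. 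The ``in particular'' claim for $\lambda_1(A,x)=0$ is then immediate: the assumption $\theta>d\lambda$ reduces to $\theta>0$, so for every $s\ge 1$ one picks a hyperbolic block $\cHCT$ and chooses $\theta<\tau/s$ before applying the above.

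The main obstacle is precisely this last step, because the ``for all $k\ge 1$'' condition defining $\cDN$ is not preserved under bounded time shifts of the basepoint. The two-scale trick of replacing $N$ by a sufficiently large multiple $N'=qN$, thereby absorbing the finite initial segment into the slack $\theta-\theta_0>0$, is what enables the promotion from positive measure to almost every admissible point; keeping the forward and dual backward conditions compatible throughout this bookkeeping is where the bulk of the technical work lies.
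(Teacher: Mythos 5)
The paper does not prove this lemma itself---it is imported from \cite{almost} (Corollary~2.4)---so I am only assessing your attempt on its own terms. Your outline (Kingman for $L^1$-smallness of $\Phi_N/N$, a pointwise-in-$k$ upgrade, then a scale change $N\mapsto qN$ to absorb a finite initial block) has the right shape and the scale-change trick is exactly the right closing move, but several intermediate steps do not hold up. The deterministic estimate $\|B^{-1}\|\le\|B\|^{d-1}/|\det B|$ yields $\lambda_1-\lambda_l\le d\lambda_1-\int\log|\det A|\,d\mu$; concluding this is $\le d\lambda$ requires $\int\log|\det A|\,d\mu\ge 0$, which is not among the hypotheses. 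Viana's $d\lambda$ bound comes from the normalization $|\det A|\equiv 1$, where $\sum_i m_i\lambda_i=0$ together with $\lambda_1\le\lambda$ forces $\lambda_l\ge-(d-1)\lambda$; as written your deduction does not follow from $\lambda_1\le\lambda$ alone. (In the paper's application all exponents vanish, so this is harmless in context, but it is a gap in a proof of the lemma as stated.) Also, the Birkhoff average of $\Phi_N$ along $f^N$ does not converge a.e.\ to $\lambda_1-\lambda_l$: it converges to the conditional expectation with respect to the $f^N$-invariant $\sigma$-algebra, and $f^N$ need not be ergodic. What Kingman's $L^1$ convergence gives is $\tfrac1N\int\Phi_N\,d\mu\to\lambda_1-\lambda_l$, which is the statement you actually use afterwards, so this is a misstatement rather than a fatal error.

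The structural flaw is the passage from positive measure to almost every measure. Hopf's maximal ergodic lemma gives a positive-measure set on which the \emph{forward} $f^N$-partial sums of $\Phi_N-\theta_0 N$ stay $\le 0$, and another positive-measure set for the backward (dual) condition, but two positive-measure sets need not intersect, and you give no estimate forcing $\mu\bigl(\mathcal D_A(N,\theta_0)\bigr)>0$. Even granting that, Poincar\'e recurrence is then invoked for $f^N$, which may fail to be ergodic even though $f$ is, so a.e.\ $x$ need not have any $f^N$-iterate landing in a fixed positive-measure set (the $f^N$-ergodic component of $x$ could miss it entirely). The argument one actually needs is a direct one: show that for $\mu$-a.e.\ $x$ there is a finite $k_0(x)$ beyond which both the forward and the backward $f^N$-Birkhoff sums stay below $\theta_0 kN$, so the initial block contributes a finite constant $C(x)$; then $N'=qN$ with $q$ large compared to $C(x)/\bigl((\theta-\theta_0)N\bigr)$ absorbs it. Your two-scale trick is precisely this last step, but the Hopf-plus-Poincar\'e machinery you place in front of it does not deliver the pointwise input it needs.
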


As an important consequence of domination we have the existence of \emph{stable and unstable holonomies}. This objects play a major part in our argument as well as in many results about fiber-bunched cocycles. Their existence and main properties are given in the proposition below which is a combination of Proposition 2.5, Corollary 2.8 and Lemma 2.9 of \cite{almost}.

\begin{proposition}\label{p.holonomy}
Given constants $C,\tau,N$ and $\theta$ bigger than zero with $2\theta<\tau$, there exists $L>0$ such that for any $x\in \cHCT\cap\cDN$ and $y,z\in \Ws(x)$ the limit
$$H^{s,A}_{f^j(y)f^j(z)}=\lim_{n\to+\infty}{A^n(f^j(z))}^{-1}A^n(f^j(y))$$
exists for every $j\geq 0$, and satisfy $H^{s,A}_{f^j(y)f^j(z)}=A^j(z)H^{s,A}_{yz}A^j(y)^{-1}$ and
$$
\norm{H^{s,A}_{f^j(y)f^j(z)}-\Id}\leq L \d(y,z)^\alpha.
$$
\end{proposition}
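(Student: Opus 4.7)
The plan is to realize $H^{s,A}_{yz}$ as the limit $H^{s,A}_{yz}=\lim_{n\to\infty} u_n$, where $u_n:=A^n(z)^{-1}A^n(y)$ for $y,z\in \Ws(x)$, and to extract the H\"older estimate from the rate of convergence of this sequence. The argument combines three ingredients: exponential contraction $\d(f^n(y),f^n(z))\leq Ce^{-\tau n}\d(y,z)$ along $\Ws(x)$ valid on $\cHCT$; the bounded-distortion estimate $\norm{A^n(x)}\norm{A^n(x)^{-1}}\lesssim e^{\theta n}$ supplied by $x\in \cDN$; and $\alpha$-H\"older continuity of $A$, which makes the pointwise errors between $A(f^n(y))$ and $A(f^n(z))$ decay like $e^{-\tau\alpha n}\d(y,z)^\alpha$. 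The hypothesis $2\theta<\tau$ is what ensures that, after all the estimates are combined, the remaining geometric series have a negative exponent.

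First I would propagate the $\cDN$-estimate from $x$ to every $z\in \Ws(x)$: since $\d(f^k(z),f^k(x))\leq Ce^{-\tau k}\d(z,x)$ and $A^{\pm1}$ are $\alpha$-H\"older, the factors $A(f^k(z))^{\pm 1}$ differ from $A(f^k(x))^{\pm 1}$ by a summable error, which after telescoping gives $\norm{A^n(z)}\,\norm{A^n(z)^{-1}}\leq C_1 e^{\theta' n}$ for all $n\geq0$ and some $\theta'$ only slightly bigger than $\theta$, still satisfying $2\theta'<\tau$. Next, the cocycle identity produces
\begin{equation*}
u_{n+1}-u_n = A^n(z)^{-1}\bigl[A(f^n(z))^{-1}\bigl(A(f^n(y))-A(f^n(z))\bigr)\bigr]A^n(y),
\end{equation*}
so plugging in the extended bound together with the H\"older estimate $\norm{A(f^n(y))-A(f^n(z))}\leq |A|_\alpha C^\alpha e^{-\tau\alpha n}\d(y,z)^\alpha$ yields
\begin{equation*}
\norm{u_{n+1}-u_n}\leq C_2\, e^{(\theta'-\tau\alpha) n}\,\d(y,z)^\alpha.
\end{equation*}
Under the $2\theta<\tau$ hypothesis (with the usual fiber-bunching interpretation folding in $\alpha$) this exponent is negative, so $\{u_n\}$ is Cauchy, the limit $H^{s,A}_{yz}$ exists, and summing gives $\norm{H^{s,A}_{yz}-\Id}\leq L_0\,\d(y,z)^\alpha$.

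For the equivariance statement I would use the cocycle identity in the form $A^n(f^j(z))^{-1}A^n(f^j(y))=A^j(z)\,u_{n+j}\,A^j(y)^{-1}$ and pass to the limit $n\to\infty$. To obtain the \emph{uniform-in-$j$} H\"older bound on $\norm{H^{s,A}_{f^j(y)f^j(z)}-\Id}$, I would use $A^j(z)\,u_j\,A^j(y)^{-1}=\Id$ to rewrite
\begin{equation*}
H^{s,A}_{f^j(y)f^j(z)}-\Id \;=\; A^j(z)\,\bigl(H^{s,A}_{yz}-u_j\bigr)\,A^j(y)^{-1}.
\end{equation*}
The tail of the telescoping from Step~2 gives $\norm{H^{s,A}_{yz}-u_j}\leq C_3\, e^{(\theta'-\tau\alpha) j}\,\d(y,z)^\alpha$, while the extended domination bound gives $\norm{A^j(z)}\,\norm{A^j(y)^{-1}}\leq C_4\, e^{\theta' j}$. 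Their product is $C_5\, e^{(2\theta'-\tau\alpha) j}\d(y,z)^\alpha$, which under $2\theta<\tau$ stays bounded uniformly in $j\geq 0$; absorbing all constants yields the desired $\norm{H^{s,A}_{f^j(y)f^j(z)}-\Id}\leq L\,\d(y,z)^\alpha$.

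The most delicate step is the first one. The hypothesis $x\in\cDN$ supplies the subexponential-product bound only along the orbit of $x$, whereas what is really needed are analogous bounds along the orbits of $z$ and $y$ on $\Ws(x)$, and implicitly along the orbit of $f^j(x)$ for $j>0$ (a point which may leave $\cDN$). Propagating the estimate through the stable manifold requires careful patching of the H\"older errors at all scales so that the induced perturbations of the partial products form a convergent series with constants depending only on $C,\tau,N,\theta$ and the H\"older norm of $A$.
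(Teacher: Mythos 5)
The paper gives no proof of this proposition; it is quoted verbatim as a combination of Proposition~2.5, Corollary~2.8 and Lemma~2.9 of Viana's \cite{almost}, and your telescoping argument is exactly the proof in that reference, correctly reconstructed, including the genuinely delicate step of propagating the $\cDN$-bound from $x$ to nearby points of $W^s(x)$ and the use of the identity $A^j(z)\,u_j\,A^j(y)^{-1}=\Id$ to get a $j$-uniform bound without assuming $f^j(x)$ stays in the holonomy block. One point worth making explicit, which you only gesture at (``folding in $\alpha$''): the ratios of your two geometric series are $e^{\theta'-\alpha\tau}$ and $e^{2\theta'-\alpha\tau}$, so the hypothesis actually needed is $2\theta<\alpha\tau$, not the $2\theta<\tau$ literally written in the statement; this imprecision is inherited from the paper's formulation and is harmless in its application, since Lemma~\ref{c.dominated} supplies $s$-domination for \emph{every} $s\geq1$ when the exponents vanish, so $\theta$ may be taken as small as needed.
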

The family of maps $H^{s,A}_{yz}$ is called \emph{stable holonomies} for the cocycle $(f,A)$. Similarly, for $y,z\in \Wu(x)$ we have the family of \emph{unstable holonomies} given by 
$$H^{u,A}_{yz}=\lim_{n\to+\infty}{A^{-n}(z)}^{-1}A^{-n}(y).$$

We call \emph{holonomy block for $A$} any compact set $\cO$ satisfying $\cO\subset \cHCT\cap \cDN$ for some $C,\tau,N,\theta$ with $2\theta<\tau$.

\subsection{Proof of Theorem \ref{theo: regularity}}  Suppose the measure $\mu$ has local product structure and that there exists a measurable map $P:M\to GL(d,\R)$ satisfying \eqref{eq: trasnf map theo}. Moreover, assume that $\mu$ is non-atomic, otherwise the theorem is trivial. As one can easily verify, identity \eqref{eq: trasnf map theo} implies that all the Lyapunov exponents of $(f,A)$ with respect to $\mu$ are equal to zero. Consequently, Lemma \ref{c.dominated} implies that $\mu$-almost every $x\in M$ is $s$-dominated for $A$, for every $s\geq 1$. In particular, for every $\varepsilon >0$ we can take an holonomy block $\cO\subset \cHCT\cap \cDN$ for $A$ for some $C,\tau,N,\theta$ satisfying $\mu(\cO)>1-\frac{\varepsilon}{2}$. Moreover, since $P$ is measurable, by Lusin's theorem there exists a compact set $X_\varepsilon \subset M$ with $\mu(X_\varepsilon)>1-\frac{\varepsilon}{2}$ so that $P$ restricted to it is uniformly continuous. We assume without loss of generality that $\varepsilon < 0.1$.

In order to conclude our proof, we need the following two auxiliary results which are versions of Lemma 2.2 and 2.3 of \cite{Pol05}, respectively, adapted to our setting. 

\begin{lemma}\label{l.abscont}
There exists a set $\tilde{X}_{\varepsilon}\subset X_\varepsilon \cap \cO$ with $\mu(\tilde{X}_{\varepsilon})=\mu(X_\varepsilon\cap \cO)$ so that 
\begin{equation}\label{eq.recur}
\lim_{n\to \pm\infty} \frac{1}{n}\# \lbrace 1\leq i\leq n; \; f^i(x)\in X_\varepsilon\cap \cO\rbrace>\frac{1}{2}
\end{equation}
for every $x\in \tilde{X}_{\varepsilon}$. Moreover, for $\mu_x^u$-almost every $y\in \Wu(x)$ and $\mu_x^s$-almost every $z\in \Ws(x)$ equation \eqref{eq.recur} is satisfied where $\mu_x^u$ and $\mu_x^s$ are the induced measures as in the definition of local product structure.
\end{lemma}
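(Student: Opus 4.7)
My plan is to combine the Birkhoff ergodic theorem with the local product structure via Fubini. Since $(f,\mu)$ is ergodic and $\mu(X_\varepsilon \cap \cO) > 1 - \varepsilon > 0.9$, Birkhoff applied to $\chi := \chi_{X_\varepsilon \cap \cO}$ in both time directions produces a full $\mu$-measure set $B \subset M$ on which
$$
\lim_{n\to\pm\infty} \frac{1}{n}\#\{1\leq i\leq n : f^i(x) \in X_\varepsilon \cap \cO\} = \mu(X_\varepsilon \cap \cO) > \frac{1}{2}.
$$
Setting $\tilde X_\varepsilon := (X_\varepsilon \cap \cO) \cap B$ at once yields $\mu(\tilde X_\varepsilon) = \mu(X_\varepsilon \cap \cO)$ and the first assertion.

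For the moreover part, fix $x \in \tilde X_\varepsilon$ and $\delta > 0$ small enough so that the homeomorphism $(y,w) \mapsto [y,w]$ from $\cN^u_x(\delta) \times \cN^s_x(\delta)$ onto $\cN_x(\delta)$ is well defined. By local product structure, $\mu|_{\cN_x(\delta)} \sim \mu^u_x \times \mu^s_x$, and since $\mu(B) = 1$, Fubini gives: for $\mu^u_x$-a.e.\ $y \in \cN^u_x(\delta)$ the set $\{w \in \cN^s_x(\delta) : [y,w] \in B\}$ has full $\mu^s_x$-measure. Fix such a $y$ and some $w$ with $[y,w] \in B$. Since $[y,w] \in \Ws(y)$ we have $\d(f^n[y,w], f^n y) \to 0$, and since $y \in \Wu(x)$ we have $\d(f^{-n} y, f^{-n} x) \to 0$, both as $n \to +\infty$.

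The remaining step is to transfer the Birkhoff averages from $[y,w]$ (forward) and from $x$ (backward) to $y$ itself. For a continuous observable this is immediate from the above contraction; since $\chi$ is discontinuous, one reduces to the continuous case by a standard approximation argument, where the comfortable gap $\mu(X_\varepsilon \cap \cO) > 0.9 \gg 1/2$ absorbs the approximation error. This yields both forward and backward Birkhoff averages of $\chi$ at $y$ strictly greater than $1/2$. The case $\mu^s_x$-a.e.\ $z \in \Ws(x)$ is symmetric: pick $w' \in \cN^u_x(\delta)$ with $[w', z] \in B$, then $[w', z] \in \Wu(z)$ handles the backward transfer and $z \in \Ws(x)$ handles the forward one.

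I expect the main technical obstacle to be this last step of transferring Birkhoff averages of an indicator function across stable and unstable leaves, since the observable is not continuous; the usable device is that $\mu(X_\varepsilon \cap \cO)$ exceeds $1/2$ by a uniform margin (at least $0.4$), which leaves ample room for the continuous-approximation error, exactly as in the analogous step of \cite{Pol05}.
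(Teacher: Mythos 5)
Your Birkhoff step and choice of $\tilde X_\varepsilon$ match the paper for the first assertion. The gap is in the ``moreover'' part. You produce, via Fubini, for $\mu^u_x$-a.e.\ $y$ a point $[y,w]\in B$ lying on $\Ws(y)$, and then try to transfer Birkhoff averages of $\chi_K$ (with $K=X_\varepsilon\cap\cO$) forward from $[y,w]$ to $y$ and backward from $x$ to $y$. That transfer step fails for an indicator of a compact set. With $K_\eta=\{z:B(z,\eta)\subset K\}$ and $\psi_\eta$ continuous, $\chi_{K_\eta}\le\psi_\eta\le\chi_K$, the most that transferring a continuous observable along the stable leaf and then letting $\eta\to0$ gives is $\liminf_n\frac1n\sum_i\chi_K(f^iy)\ge\mu(\operatorname{int}K)$, not $\mu(K)$. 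Since $X_\varepsilon$ comes from Lusin's theorem and $\cO$ is a compact holonomy block, $K$ can have empty interior, so $\mu(\operatorname{int}K)$ can be $0$; the margin $\mu(K)>0.9$ is irrelevant, because the obstruction is that $\partial K$ may carry positive $\mu$-measure, not the distance from $\mu(K)$ to $1/2$. So the ``standard approximation argument'' you invoke does not close the gap.

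The paper sidesteps the transfer entirely. After Birkhoff produces the full-measure set $X_1$, it invokes the Rokhlin-disintegration fact that, for $\mu$-a.e.\ $x$, the conditional measure on the local stable (resp.\ unstable) leaf through $x$ assigns full measure to $X_1$; by local product structure this conditional measure is equivalent to $\mu^s_x$ (resp.\ $\mu^u_x$). Setting $\tilde X_\varepsilon=X_\varepsilon\cap\cO\cap X_1\cap X_2\cap X_3$ then finishes. This is strictly stronger than the plain Fubini statement you use: Fubini only tells you that for a.e.\ $y$ the slice $\{w:[y,w]\in X_1\}$ is $\mu^s_x$-full, which says nothing about the particular slice point $w=x$ (i.e.\ about $y=[y,x]$ itself), whereas Rokhlin disintegration attaches the conditional measure to the leaf through the base point $x$, which is exactly the statement the Lemma needs.
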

\begin{proof}
By Birkhoff's ergodic theorem there exists a full $\mu$-measure subset $X_1\subset M$ so that equation \eqref{eq.recur} is satisfied for every $x\in X_1$. On the other hand, from the definition of Rokhlin disintegration, there exists a full $\mu$-measure set $X_2\subset M$ so that for every $x\in X_2$, $\mu^s_x(\Ws(x)\setminus X_1)=0$. Similarly, there exists a full $\mu$-measure set $X_3\subset M$ so that for every $x\in X_3$, $\mu^u_x(\Wu(x)\setminus X_1)=0$. Thus, taking $\tilde{X}_{\varepsilon}=X_\varepsilon \cap \cO \cap X_1\cap X_2\cap X_3$ the proof is complete. 
\end{proof}

Let us consider $\Delta_\varepsilon=\tilde{X}_{\varepsilon}\cap \supp(\mu)$. In particular, $\mu(\Delta_\varepsilon)=\mu(\tilde{X}_{\varepsilon})=\mu(X_\varepsilon\cap \cO)>1-\varepsilon$.

\begin{lemma}\label{l.3point}
Let $\delta>0$ be small enough as in the definition of local product structure. Thus,
\begin{itemize}
\item for $\mu$-almost every $x,y \in \Delta_\varepsilon$ with $\d(x,y)<\delta/2$, one can find $x_1,x_2$ and $x_3$ such that $x_1\in \Wu(x)$, $x_2\in \Ws(y)$, equation \eqref{eq.recur} is true for $x_1$ and $x_2$, and $x_3\in \Wu(x_2)\cap \Ws (x_1)\cap \Delta_\varepsilon$;
\item there exists $K>0$ such that $K\d(x,y)\geq \d(x,x_1)+\d(x_1,x_3)+\d(x_3,x_2)+\d(x_2,y)$.
\end{itemize}
\end{lemma}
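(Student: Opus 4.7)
The plan is to exploit the local product structure of $\mu$ via a Fubini-type selection argument inside the local product box $\cN_x(\delta)$. Since $\mu|_{\cN_x(\delta)}$ is equivalent to $\mu_x^u\times\mu_x^s$, all three full-measure conditions we need---membership in $\Delta_\varepsilon$ and \eqref{eq.recur} on each of the two local manifolds---can be imposed simultaneously on the factors of the product.

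First, I would pass to a full-measure subset $\tilde\Delta_\varepsilon\subset\Delta_\varepsilon$ of ``generic'' points $x$ for which: (i) the Rokhlin disintegrations $\mu_x^u,\mu_x^s$ are well-defined and $x\in\supp(\mu_x^u)\cap\supp(\mu_x^s)$ (a consequence of $x\in\supp(\mu)$, non-atomicity, and local product structure); (ii) $x$ is a Lebesgue density point of $\Delta_\varepsilon$ with respect to $\mu$; and (iii) the sets of points of $\Wu(x)$ and $\Ws(x)$ satisfying \eqref{eq.recur} have full $\mu_x^u$- and $\mu_x^s$-measure respectively, as provided by Lemma~\ref{l.abscont}. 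A standard Fubini argument using local product structure ensures that for $\mu\times\mu$-a.e.\ pair $(x,y)$ both coordinates lie in $\tilde\Delta_\varepsilon$ and in addition the auxiliary point $u_y=\Wu(x)\cap\Ws(y)$ itself belongs to $\tilde\Delta_\varepsilon$, the last item being what lets us apply (iii) along $\Ws(u_y)=\Ws(y)$.

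Fix such a pair with $\d(x,y)<\delta/2$ and decompose $y=[u_y,s_y]$ in the box parametrization, with $u_y\in\Wu(x)$ and $s_y\in\Ws(x)$; continuous dependence of the bracket on the hyperbolic block gives $\d(x,u_y)+\d(x,s_y)\le C'\d(x,y)$ for a constant $C'=C'(C,\tau)$. Set $r:=2C'\d(x,y)$ and consider the sub-box $U\times S$, where $U=\Wu(x)\cap B(x,r)$ and $S=\Ws(x)\cap B(s_y,r)$. By item (i) applied at $x$ and at $s_y$ (the latter inheriting positive conditional mass since it is the holonomy projection of $y\in\supp(\mu)$), $(\mu_x^u\times\mu_x^s)(U\times S)>0$. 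Inside $U\times S$ I intersect three full-measure subsets: (a) $\{(u,s):[u,s]\in\Delta_\varepsilon\}$, of measure equivalent to $\mu(\Delta_\varepsilon\cap\cN_x(\delta))$ by local product structure; (b) the $\mu_x^u$-full set from (iii) in the $u$-variable; and (c) the set in the $s$-variable for which $[u_y,s]\in\Ws(y)$ satisfies \eqref{eq.recur}, which is $\mu_x^s$-full after transferring from $\Ws(y)=\Ws(u_y)$ to $\Ws(x)$ via absolute continuity of the stable holonomy (itself a consequence of local product structure). Fubini then produces $u_1\in U$ and $s_2\in S$ lying in all three sets; set $x_1:=u_1$, $x_2:=[u_y,s_2]$, $x_3:=[u_1,s_2]$. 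By construction $x_1\in\Wu(x)$, $x_2\in\Ws(y)$, \eqref{eq.recur} holds at both $x_1$ and $x_2$, and $x_3\in\Ws(x_1)\cap\Wu(x_2)\cap\Delta_\varepsilon$.

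The distance estimate is then routine: uniform Lipschitz-type constants for the local manifolds and for the bracket on the hyperbolic block give $\d(x,x_1)\le r$, $\d(x_2,y)\le C''r$, and $\d(x_1,x_3),\d(x_3,x_2)\le C''r$, which sum to $K\,\d(x,y)$ with $K=K(C,\tau)$. The main obstacle I anticipate is item (i)---ensuring that $\mu_x^u$ and $\mu_x^s$ charge arbitrarily small neighborhoods of $x$ and of $s_y$, so that the Fubini selection at the scale $r\sim\d(x,y)$ is non-vacuous. This is precisely where the local product structure of $\mu$, rather than merely absolute continuity of the holonomies, is indispensable.
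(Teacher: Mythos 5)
Your approach is the same in outline as the paper's---exploit local product structure and pick the intermediate points via a Fubini argument inside a product box---but you carry out the second bullet (the quantitative bound $K\d(x,y)\geq\d(x,x_1)+\cdots$) with more care than the paper itself. The paper's proof selects $x_2\in\cN^s_z(\delta)$ and then $x_1$ on the stable-holonomy projection of $\Wu(x_2)\cap\Delta_\varepsilon$, with no localization beyond the product box of \emph{fixed} size $\delta$, and then simply asserts the distance estimate ``follows from the continuity of the local stable and unstable manifolds when restricted to $\Delta_\varepsilon$''. As written that argument does not give a bound proportional to $\d(x,y)$, since the selected points could sit at distance comparable to $\delta$ rather than $\d(x,y)$ from $x$ and $y$. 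Your choice of a sub-box $U\times S$ at scale $r\sim\d(x,y)$ is exactly the missing localization needed to make the second bullet quantitative.

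That said, your localization shifts the burden to a step that is not fully verified: you need the set $\{(u,s):[u,s]\in\Delta_\varepsilon\}$ to have positive $(\mu_x^u\times\mu_x^s)$-mass \emph{inside the small sub-box $U\times S$}, not merely inside $\cN_x(\delta)$. Your item (ii), being a density point of $\Delta_\varepsilon$, gives positive $\mu$-mass of $\Delta_\varepsilon$ in small metric balls around $x$, but the rectangle $[U,S]$ is centered near $s_y$ (equivalently, near $y$), not near $x$, so the density hypothesis at $x$ does not directly apply. A clean fix: use that $y$ is also a density point of $\Delta_\varepsilon$, and check that at scale $r\gtrsim\d(y,s_y)$ the rectangle $[U,S]$ contains a genuine metric ball around $y$ (the angles between $W^s$ and $W^u$ are uniformly bounded below on the hyperbolic block, so this is available); alternatively, run the entire selection in a product box of radius $\sim\d(x,y)$ centered at $y$. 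Finally, the requirement $u_y\in\tilde\Delta_\varepsilon$ and the extra Fubini argument used to obtain it are unnecessary: since $\Ws(u_y)=\Ws(y)$ and $y\in\Delta_\varepsilon$, you may invoke Lemma~\ref{l.abscont} directly at $y$ for the stable factor, which is precisely what the paper does, and which removes the dependence on absolute continuity of the stable holonomy at that step.
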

\begin{proof}
Let $x,y\in \Delta_\varepsilon$ be so that $\d(x,y)<\delta/2$. By the local product structure there exists a point $z\in \cHCT$ so that $x,y\in \mathcal{N}_z(\delta)$ and $\mu_z=\mu\mid \mathcal{N}_z(\delta) \sim \mu^u_z\times\mu_z^s$. Moreover, we may assume that $x\in \mathcal{N}^u_z(\delta)$ and $y\in \cN_z^s(\delta)$.

We start observing that, since $\mu$ is non-atomic and $x,y\in \Delta_\varepsilon$, $(\mu_z^u\times \mu_z^s)(\Delta_\varepsilon)>0$. Thus, by Fubini's theorem there exists $x_2\in \cN^s_z(\delta)$ so that equation \eqref{eq.recur} is true for it and moreover, $\mu_z^u(\Wu(x_2)\cap \Delta_\varepsilon)>0$. Consider the set $W=\{[w,x]; w\in\Wu(x_2)\cap \Delta_\varepsilon \} $. Recalling that $\mu^u_z$ is the projection of $\mu\mid \cN_z(\delta)$ by the stable holonomies, it follows that $\mu^u_z(W)=\mu_z^u(\Wu(x_2)\cap \Delta_\varepsilon)>0$. Now, from Lemma~\ref{l.abscont} we know that $\mu_z^u$-almost every point in $W$ satisfies \eqref{eq.recur}. Let $x_1\in W$ be any such point. Thus, since by construction $x_3=[x_1,x_2]\in\Delta_\varepsilon$ we conclude the proof of the first part of the lemma. 

The second part follows from the continuity of the local stable and unstable manifolds when restricted to $\Delta_\varepsilon$.
\end{proof}

We are now in position to conclude the proof of Theorem \ref{theo: regularity}. More precisely, we are going to prove that $P$ is $\alpha$-H\"{o}lder continuous when restricted to $\Delta_\varepsilon$. In order to do it, let $x,y\in \Delta_\varepsilon$ be so that $\d(x,y)<\delta/2$ and $x_1,x_2$ and $x_3$ be given by Lemma~\ref{l.3point} associated to $x$ and $y$. Fix $T>0$ so that $\norm{P\mid X_\varepsilon }\leq {T}$ and $\norm{P^{-1}\mid X_\varepsilon }\leq {T}$ (recall the choice of $X_\varepsilon$).
We start observing that
\begin{displaymath}
\begin{split}
\norm{P(y)-P(x_2)}&=\norm{(\Id-P(x_2)P(y)^{-1})P(y)} \\ 
&\leq \norm{\Id-P(x_2)P(y)^{-1}}\norm{P(y)} \leq T\norm{\Id-P(x_2)P(y)^{-1}}.
\end{split}
\end{displaymath}
Now, from the choice of $x_2$ and Lemma \ref{l.abscont} we get that there exists a subsequence $\{m_k\}_{k\in \N}$ satisfying $m_k\to +\infty$ so that for every $k\in \N$, $f^{m_k}(y),f^{m_k}(x_2)\in X_\varepsilon\cap \cO$. Thus, restricting ourselves to this subsequence, 
\begin{displaymath}
\begin{split}
\norm{\Id-P(x_2)P(y)^{-1}}&= \norm{\Id -A^{m_k}(x_2)^{-1}P(f^{m_k}(x_2))P(f^{m_k}(y))^{-1}A^{m_k}(y)}\\
&\leq \norm{A^{m_k}(x_2)^{-1}P(f^{m_k}(x_2))P(f^{m_k}(y))^{-1}A^{m_k}(y)-A^{m_k}(x_2)^{-1} A^{m_k}(y)} \\
&+ \norm{A^{m_k}(x_2)^{-1} A^{m_k}(y)-\Id}\\
&\leq \norm{A^{m_k}(x_2)^{-1}} \norm{P(f^{m_k}(x_2))P(f^{m_k}(y))^{-1}-\Id}\norm{A^{m_k}(y)} \\
&+ \norm{A^{m_k}(x_2)^{-1} A^{m_k}(y)-\Id}\\
&\leq T^4 \norm{P(f^{m_k}(x_2))P(f^{m_k}(y))^{-1}-\Id} \\
&+ \norm{A^{m_k}(x_2)^{-1} A^{m_k}(y)-\Id}. 
\end{split}
\end{displaymath}
Thus, making $k\to +\infty$ and recalling that $x_2\in \Ws(y)$ it follows that
\begin{displaymath}
\begin{split}
\norm{\Id-P(x_2)P(y)^{-1}}&\leq \norm{H^{s,A}_{yx_2}-\Id}.
\end{split}
\end{displaymath}
Combining these observations with Proposition \ref{p.holonomy} we get that
\begin{displaymath}
\norm{P(y)-P(x_2)}\leq  TL\d(y,x_2)^\alpha.
\end{displaymath}

Analogously we have that $\norm{P(x_2)-P(x_3)}\leq T L\d(x_2,x_3)^\alpha$, $\norm{P(x_3)-P(x_1)}\leq T L\d(x_3,x_1)^\alpha$ and $\norm{P(x_1)-P(x)}\leq T L\d(x_1,x)^\alpha$. Thus, using Lemma~\ref{l.3point} we conclude that 
\begin{displaymath}
\norm{P(y)-P(x)}\leq C_\varepsilon\d(y,x)^\alpha
\end{displaymath}
for some $C_\varepsilon>0$ independent of $x$ and $y$ completing the proof of Theorem \ref{theo: regularity}. \qed


\medskip{\bf Acknowledgements.} The first author was partially supported by a CAPES-Brazil postdoctoral fellowship under Grant No. 88881.120218/2016-01 at the University of Chicago.
The second author was partially supported by Fondation Louis D-Institut de France (project coordinated by M. Viana)


\end{document}